\date{\today}
\newtheorem{thm}{Theorem}[section]
\newtheorem{lemma}[thm]{Lemma}
\newtheorem{proposition}[thm]{Proposition}
\theoremstyle{remark}
\newtheorem{remark}[thm]{Remark}
\newcommand{\R}{\mathbb{R}}
\newcommand{\C}{\mathbb{C}}
\newcommand{\Z}{\mathbb{Z}}
\newcommand{\cD}{\operatorname{\mathcal{D}}}
\newcommand{\cDmo}{\operatorname{\mathcal{D}_{m,\omega}}}
\newcommand{\cL}{\mathcal{L}}
\renewcommand\phi{\varphi}
\newcommand{\eps}{\varepsilon}
\renewcommand{\geq}{\geqslant}
\renewcommand{\leq}{\leqslant}
\newcommand{\Del}{\operatorname{\Delta}}
\DeclareMathOperator{\supp}{supp}
\renewcommand{\geq}{\geqslant}
\renewcommand{\leq}{\leqslant}
\newcommand\I{{\rm{i}}}
\newcommand\N{{\mathbb{N}}}
\newcommand{\be}{\begin{equation}}
\newcommand{\ee}{\end{equation}}
\title[Symmetric solutions for a 2D critical Dirac equation]{Symmetric solutions for a 2D critical Dirac equation}
\author[W. Borrelli]{William Borrelli}
\address[W. Borrelli]{Centro De Giorgi, Scuola Normale Superiore, Piazza dei Cavalieri 3, I-56100 , Pisa, Italy.}
\email{william.borrelli@sns.it}
\begin{document}

\begin{abstract}
In this paper we show the existence of infinitely many symmetric solutions for a cubic Dirac equation in two dimensions, which appears as effective model in systems related to honeycomb structures. Such equation is critical for the Sobolev embedding and solutions are found by variational methods. Moreover, we also prove smoothness and exponential decay at infinity.
\end{abstract}

\maketitle
{\footnotesize
\emph{Keywords}: nonlinear Dirac equations, critical point theory, existence results, critical nonlinearity, honeycomb structure.

\medskip

\emph{2020 MSC}: 35Q40, 35B33, 35A15 .
}


\section{Introduction}
\label{sec-intro}

\subsection{Motivation and main results}

This paper is devoted to the study of solutions of the following nonlinear massive Dirac equations
\be\label{criticaldirac}
(\cD+m\sigma_3-\omega)\psi=\vert\psi\vert^2\psi  \qquad\mbox{on}\quad\R^{2}\,,
\ee
where $\omega\in(-m,m)$ is a frequency in the spectral gap of the Dirac operator $\cD+m\sigma_3$, with $m>0$, and the nonlinearity is \emph{Sobolev-critical}. 
\medskip

Equation \eqref{criticaldirac} appears in the effective description of wave propagation in two-dimensional systems with the symmetries of a honeycomb lattice, under suitable assumptions. More precisely, If $V\in C^\infty(\R^2,\R)$ possesses the symmetries of a honeycomb lattice, the Schr\"odinger operator
\be\label{eq:hamiltonian}
H=-\Del+V(x)\,,\qquad x\in\R^2\,,
\ee
exhibit generically conical intersections (the so-called Dirac points) in its dispersion bands, as proved in \cite{FWhoneycomb}. The \emph{massless} (i.e., $m=0$) Dirac operator then appears as an effective operator describing, the dynamics of wave packets spectrally concentrated around those conical points \cite{FWwaves}. A mass term appears in the effective equation, when a perturbation breaking parity is added, as shown in \cite[Appendix]{FWhoneycomb}. Moreover, considering stationary solutions of the nonlinear Schr\"odinger equation 
\[
\I\partial_t u=H u+\vert u\vert^2u\,,
\]
with frequency corresponding to the conical crossing (at least formally) leads to an effective cubic nonlinearity of the form
\be\label{eq:generalcubic}
 \begin{pmatrix}
( \beta_1 |\psi_2|^2 + 2\beta_2 |\psi_1|^2)\psi_2 \\
( \beta_1 |\psi_1|^2 + 2\beta_2 |\psi_2|^2)\psi_1
\end{pmatrix}\,,\qquad \mbox{with $\psi=(\psi_1,\psi_2)^T$,}
\ee
as first computed in \cite{wavedirac}, where the parameters $\beta_1,\beta_2>0$ depend on the potential $V$ in \eqref{eq:hamiltonian}. In this paper we focus on the effective equation \eqref{criticaldirac} with a pure power nonlinearity, corresponding to the choice of parameters $\beta_1=1, \beta_2=1/2$, as it clearly leads to the same analytical difficulties as the general case. We mention the paper \cite{arbunichsparber}, where the validity of the effective cubic equation is addressed. Moreover, existence and qualitative properties of stationary solutions to the effective massless cubic Dirac equation have been studied in \cite{massless,borrellifrank}. Concerning the massive case \eqref{criticaldirac}, the existence of a solution of a particular symmetric form has been established in \cite{shooting} by dynamical systems arguments. In this paper we prove the existence of infinitely many (distinct) symmetric solutions, using variational methods, crucially exploiting the classification for the limit massless equation obtained in \cite{borrellifrank} (see Section \ref{sec:preliminaries}).
\smallskip

Critical Dirac equations have been studied  in connection with problem from conformal spin geometry, for which we refer the reader to \cite{ammannsmallest,spinorialanalogue,ammannmass,bartschspinorial,nadineconformalinvariant,nadineboundedgeometry,isobecritical,maalaoui,raulot} and references therein. We also mention that the case of coupled systems involving the Dirac operator and critical nonlinearities have also been recently studied in the literature, see \cite{Borrelli-Maalaoui-JGA2020,Maalaoui-Martino-JDE19}. From the point of view of analysis, those equation are conformally covariant or involve conformally covariant nonlinear terms, so that one has to deal with the associated loss of compactness, looking for a solution by variational methods. The required compactness analysis for our case is performed in Section \ref{sec:compactness}. The variational approach to nonlinear Dirac equations has been introduced in \cite{es} and has been subsequently widely employed, see for instance \cite{bartschspinorial,dingruf,dingwei,isobecritical}.
\medskip

In this paper we focus on the existence of symmetric solutions to \eqref{criticaldirac} of the following form
\begin{equation}\label{eq:ansatz}
\psi(x) = \begin{pmatrix}
\psi_1(x) \\
\psi_2(x) \end{pmatrix}
=
\begin{pmatrix}
v(r) e^{\I S\theta} \\
\I u(r) e^{\I (S+1)\theta}
\end{pmatrix} \,,\qquad x\in\R^2\,,S\in\Z\,,
\end{equation}
where $(r,\theta)\in(0,\infty)\times\mathbb{S}^1$ are polar coordinates in $\R^2$, and $u,v$ are real-valued functions.
\begin{remark}
Functions of the above form are the counterpart for the Dirac operator, of radial solutions for the Laplace equation. Indeed, while the laplacian commutes with rotations, this is not the case for the Dirac operator. More details about this property and its physical meaning can be found, for instace, in \cite{diracthaller} and references therein. 
\end{remark}
\begin{thm}\label{thm:main}
Let $S\in\Z$, $S\neq0$ and take $\omega\in (-m,m)$, with $m>0$. Then equation \eqref{criticaldirac} admits a non-trivial solution $\psi \in C^\infty(\R^2,\C^2)$ of the form \eqref{eq:ansatz}. Such solution vanishes at the origin, i.e. $\psi(0)=0$, and it
is exponentially localized, namely
\[
\vert\psi(r,\theta)\vert\leq C e^{-\frac{\sqrt{m-\omega}}{2}\,r}\,\qquad r>0,\theta\in\mathbb{S}^1\,,
\]
for some constant $C>0$.
\end{thm}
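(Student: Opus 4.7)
The plan is to recast Theorem \ref{thm:main} as a critical-point problem for a strongly indefinite functional on a symmetric subspace of $H^{1/2}(\R^2,\C^2)$, and to overcome the lack of compactness coming from the critical Sobolev exponent by controlling the min-max levels from above via the explicit ground states of the massless limit equation recalled in Section \ref{sec:preliminaries}.

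First I would substitute the ansatz \eqref{eq:ansatz} into \eqref{criticaldirac}. Writing $\cD$ in polar coordinates and using that $\sigma_1\partial_1+\sigma_2\partial_2$ shifts the angular phase by $\pm 1$, the equation reduces to an ODE system for real functions $(u,v)\colon(0,\infty)\to\R^2$ of the schematic form
\[
u'+\tfrac{S+1}{r}u=-(m-\omega)v+(u^2+v^2)v,\qquad v'-\tfrac{S}{r}v=(m+\omega)u-(u^2+v^2)u.
\]
For $S\neq 0$, boundedness at the origin forces $u(0)=v(0)=0$, which accounts for the vanishing of $\psi$ at $0$ asserted in the theorem and structurally suppresses the centripetal singularity that would otherwise obstruct the compactness analysis. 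The natural action functional is
\[
L(\psi)=\tfrac{1}{2}\int_{\R^2}\langle\psi,(\cD+m\sigma_3-\omega)\psi\rangle\,dx-\tfrac{1}{4}\int_{\R^2}|\psi|^4\,dx,
\]
of class $C^2$ on $H^{1/2}(\R^2,\C^2)$ since $L^4$ is exactly the critical embedding in dimension two. Because $\spec(\cD+m\sigma_3-\omega)=(-\infty,-m-\omega]\cup[m-\omega,+\infty)$, $L$ is strongly indefinite. Denoting by $\cH^S\subset H^{1/2}(\R^2,\C^2)$ the closed subspace of spinors of the form \eqref{eq:ansatz}, the Dirac operator and the cubic nonlinearity both preserve $\cH^S$, so by the principle of symmetric criticality critical points of $L|_{\cH^S}$ solve \eqref{criticaldirac}. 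I would then set up a linking / Nehari-type min-max on $\cH^S$ in the spirit of Esteban-S\'er\'e, using the spectral decomposition of $\cH^S$ into the positive and negative parts of $\cD+m\sigma_3-\omega$; see \cite{es,bartschspinorial,dingruf}.

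The main obstacle is the compactness analysis. A bounded Palais-Smale sequence $(\psi_n)\subset\cH^S$ at the min-max level $c_S$ can a priori lose compactness by concentration at a point, a phenomenon dictated by the conformal invariance of the limit critical equation. The translation and vanishing losses typical for problems on $\R^2$ are ruled out once one works in $\cH^S$: the enforced angular symmetry together with the vanishing at the origin yield a compact embedding of $\cH^S$ into $L^p_{\loc}$ for subcritical $p<4$ and kill translational invariance. The remaining concentration scenario, after blow-up, extracts a nontrivial solution of the \emph{massless} critical Dirac equation $\cD\varphi=|\varphi|^2\varphi$ with the same symmetry, whose ground states are fully classified in \cite{borrellifrank}. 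Letting $c^*_S$ denote the corresponding ground-state energy in $\cH^S$, Palais-Smale compactness is recovered as soon as $c_S<c^*_S$. The strict inequality, which is the technical heart of the proof and the content of Section \ref{sec:compactness}, I would establish in the spirit of Brezis-Nirenberg by evaluating the min-max on test spinors built from the explicit massless ground states, exploiting the favorable sign of the mass term to lower the energy by a positive amount and so beat the threshold.

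Once a nontrivial critical point $\psi_S\in\cH^S$ is produced for each $S\in\Z\setminus\{0\}$, smoothness follows by a standard bootstrap: from $\psi_S\in L^4$ one gets $|\psi_S|^2\psi_S\in L^{4/3}$, elliptic regularity for $\cD$ gives $\psi_S\in W^{1,4/3}\subset L^q$ for a larger exponent, and iterating yields $\psi_S\in C^\infty$. Exponential decay at rate $\sqrt{m-\omega}/2$ comes from an Agmon-type comparison: squaring the equation shows that $|\psi_S|$ is a subsolution, outside a large ball, of an elliptic equation with effective potential bounded below by $m-\omega$, and the pointwise bound follows by a standard barrier argument against $e^{-\frac{\sqrt{m-\omega}}{2}r}$. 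Finally, spinors of the form \eqref{eq:ansatz} with different $S$ live in pairwise $L^2$-orthogonal angular Fourier sectors, so the family $\{\psi_S\}_{S\in\Z\setminus\{0\}}$ consists of mutually distinct solutions, delivering the infinitely many symmetric solutions announced in the abstract.
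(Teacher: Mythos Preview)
Your overall strategy is correct and close to the paper's: restrict to the symmetric subspace, set up a min-max, identify the compactness threshold via the classified massless bubbles from \cite{borrellifrank}, and beat the threshold with Brezis--Nirenberg--type test spinors built from those bubbles. Two points deserve comment.

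\medskip
\textbf{A methodological difference.} The paper does not attack the strongly indefinite functional $L$ directly. Instead it passes, via Legendre transform, to a \emph{dual} functional $\cL^*_\omega$ on $L^{4/3}$ (Section~\ref{sec:dual}), which has mountain-pass rather than linking geometry; critical points and Palais--Smale sequences are in one-to-one correspondence with those of $L$. This buys a simpler min-max and a cleaner level estimate (Lemma~\ref{lem:inequality}). Your Esteban--S\'er\'e linking on $\cH^S$ is a legitimate alternative and should work, but it carries the usual overhead of strongly indefinite min-max; the paper's dual route sidesteps that. (Incidentally, the strict inequality is not in Section~\ref{sec:compactness}; the compactness analysis is there, while the threshold estimate is Lemma~\ref{lem:inequality} in Section~\ref{sec:proof}.)

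\medskip
\textbf{A genuine gap in the regularity step.} You claim a standard bootstrap: from $\psi_S\in L^4$ one gets $|\psi_S|^2\psi_S\in L^{4/3}$, elliptic regularity for $\cD$ gives $\psi_S\in W^{1,4/3}\subset L^q$ for a larger $q$, and one iterates. In two dimensions the critical Sobolev exponent for $W^{1,4/3}$ is exactly $4$, so this step returns you to $L^4$ and the bootstrap stalls on the first round. This is precisely the criticality of the equation, and the paper explicitly flags it: ``Since the nonlinearity in \eqref{criticaldirac} is Sobolev-critical, regularity does not follow by standard arguments and one needs a more refined bootstrap argument''. Proposition~\ref{prop:regularity} carries out that refinement: one writes $\psi=\Gamma\ast(|\psi|^2\psi)$ with $\Gamma\in L^{2,\infty}$, splits $|\psi|^2=f_\eps+g_\eps$ into a bounded compactly supported piece and a piece with small $L^2$ norm, and uses a duality/absorption argument to push $\psi$ into $L^r$ for all $r<\infty$, after which $L^\infty$ and smoothness follow. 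Without this device your smoothness claim is unproved.
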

\begin{remark}
The solutions given by the above Theorem and that found in \cite{shooting} (for $S=0$) have the same exponential decay rate, but we do not know if such estimate is optimal. However, by \cite[Corollary 1.8]{CassanoDecay} one easily sees that solutions cannot decay at infinity faster than a gaussian. Indeed, in two dimensions such result holds in particular for Dirac equations of the form $(\cD+m\sigma_3)\psi+\mathbb{W}\psi=\omega\psi$, where $\mathbb{W}\in L^\infty_{loc}(\R^2,\C^2\times\C^2)$, and in our case it suffices to consider the scalar potential $\mathbb{W}:=\vert\psi\vert^2$.
\end{remark}

In \cite{shooting} solutions of the form \eqref{eq:ansatz} with $S=0$ were found by dynamical systems method, while in this paper we deal with $S\neq0$ using variational methods. We remark that our proof relies on the localization properties of the solutions of the limit equation \eqref{eq:limitequation} of the form \eqref{eq:ansatz}. Those functions corresponds to the blow-up profiles appearing in the variational procedure, and have been classified in \cite{borrellifrank}. For $S=0$, it turns out they are not square integrable, which prevents the application of the variational methods employed for $S\neq0$, while in this case solutions have a stronger decay at infinity.

\subsection{Outline of the paper}
The paper is organized as follows. In Section \ref{sec-intro} we state the main result of the paper and give an introduction to the problem under study. Section \ref{sec:preliminaries} contains some preliminary notions used in the sequel, while in Section \ref{sec:dual} we explain how to reformulate the problem in an equivalent way, using duality arguments. As explained through the paper, this approach allows to simplify the proof of the main results. The required compactness analysis is performed in Section \ref{sec:compactness}. Finally, we give the proof of the main Theorem in Section \ref{sec:proof}, which is divided into three different steps, for the convenience of the reader.
\medskip

\noindent{\bf Acknowledgements.}
The author is grateful to B. Cassano for bringing to his attention some results contained in \cite{CassanoDecay}.

The author is member of GNAMPA as part of INdAM and is supported by Centro  di  Ricerca  Matematica  \emph{Ennio  de  Giorgi}. 

\section{Preliminaries}\label{sec:preliminaries}
In this section we present some basic notions and recall results useful in the sequel.
\subsection{The operator} The \emph{Dirac operator} is a first order differential operator formally defined in two dimensions as
\begin{equation}
\mathcal{D}_{m}=\mathcal{D}+m\sigma_{3}:=-\I\sigma\cdot\nabla+m\sigma_{3}
\end{equation}
The constant $m>0$ is referred to as the `mass', as it usually represents such quantity in applications. In the above formula we use the notation $\sigma\cdot\nabla:=\sigma_{1}\partial_{1}+\sigma_{2}\partial_{2}$ and the $\sigma_{k}$ are the Pauli matrices
\begin{equation}\label{eq:pauli} \sigma_{1}:=\begin{pmatrix} 0 \quad& 1 \\ 1 \quad& 0 \end{pmatrix}\quad,\quad \sigma_{2}:=\begin{pmatrix} 0 \quad& -\I \\ \I \quad& 0 \end{pmatrix} \quad,\quad \sigma_{3}:=\begin{pmatrix} 1 \quad& 0 \\ 0 \quad& -1 \end{pmatrix}\,.\end{equation}

The operator $\mathcal{D}_{m}$ is a self-adjoint operator on $L^{2}(\mathbb{R}^{2},\mathbb{C}^{2})$, with domain $H^{1}(\mathbb{R}^{2},\mathbb{C}^{2})$ and form-domain $H^{1/2}(\mathbb{R}^{2},\mathbb{C}^{2})$. 

Moreover, since in Fourier domain $p=(p_{1},p_{2})$ the Dirac operator becomes the multiplication operator by the matrix 
$$ \widehat{\mathcal{D}}_{m}(p)=\begin{pmatrix}m \quad & p_{1}-\I p_{2}\\ p_{1}+\I p_{2}\quad &m \end{pmatrix}$$ 
then the spectrum is given by

\begin{equation}\label{eq:spectrum}
Spec(\mathcal{D}_{m})=(-\infty, -m]\cup[m, +\infty)
\end{equation}
where the gap is due to the mass term. The reader can find of the above mentioned results in \cite{diracthaller}.
\subsection{The functional}
Equation \eqref{criticaldirac} can be regarded as the Euler-Lagrange equation for the functional
\be\label{eq:action}
\cL_\omega(\psi)=\frac{1}{2}\int_{\R^2}\langle(\cD+m\sigma_3-\omega)\psi,\psi\rangle\,dx-\frac{1}{4}\int_{\R^2}\vert\psi\vert^4\,dx\,,
\ee
defined for $\psi\in H^{1/2}(\R^2,\C^2)$.

The above functional is strongly indefinite, that is, it is unbounded both from above and below, even modulo finite dimensional subspaces. This is due to the unboundedness of the spectrum \eqref{eq:spectrum}. This constitutes a difficulty for the application of variational methods, but several techniques have been introduced to deal with such situations (see for instance \cite{struwevariational} or \cite{els}). The principal difficulty in the application of variational methods to the search of critical points of \eqref{eq:action} is given by the fact that we have to deal with the Sobolev embedding 
\[
H^{\frac{1}{2}}(\R^2,\C^2)\hookrightarrow L^4(\R^2,\C^2)\,,
\]
whose compactness is prevented by the invariance by translation and scaling. The latter, in particular, implies that the embedding is not even locally compact and gives rise to the so called \emph{bubbling phenomenon}. This means that Palais-Smale sequences for the functional $\cL_\omega$ can concentrate peaking around some points in $\R^2$, preventing strong convergence in $L^4$ and thus in $H^{1/2}$. This phenomenon is common to variational problems involving Sobolev critical nonlinearities. We refer to \cite{CCcompactI,CCcompactII} for a general account on this kind of problems, in the framework of Concentration-Compactness theory and to \cite{isobecritical}, where the blow-up analysis has been carried out for a critical Dirac equation on a compact spin manifold. A similar result holds in our case, as explained in Section \ref{sec:compactness}.
\smallskip

In the sequel we will consider critical points of the functional restricted to the susbpace $E_S\subseteq H^{1/2}(\R^2,\C^2)$ of spinors of the form \eqref{eq:ansatz}, see Section \ref{sec:proof}. Such ansatz breaks the invariance by translation and partially simplifies the compactness analysis giving the origin as the only possibile blow-up point for Palais-Smale sequences.
\subsection{The limit equation}
The blow-up profiles (the so-called \emph{bubbles}) appearing in Palais-Smale sequences for $\cL_\omega$ are given by rescaled solutions $\Psi\in \mathring{H}^{1/2}(\R^2,\C^2)$ of the equation
\be\label{eq:limitequation}
\cD\Psi=\vert\Psi\vert^2\Psi\,,
\ee
which can be considered as the the \emph{limit equation} with respect to scaling. Indeed, at least formally, one can realize that considering the scaling
 \be\label{eq:scaling}
\varphi\mapsto\varphi_\delta:=\delta^{-1}\varphi(\delta^{-2}\cdot)
\ee 
and letting $\delta\to0$. Equation \eqref{eq:limitequation} is the Euler-Lagrange equation for the functional
\be\label{eq:conformalfunctional}
\cL_0(\varphi)=\frac{1}{2}\int_{\R^2}\langle\cD\varphi,\varphi\rangle\,dx-\frac{1}{4}\int_{\R^2}\vert\varphi\vert^4\,dx\,,
\ee
which then is invariant by translation and scaling. More precisely, both terms in \eqref{eq:conformalfunctional} are individually invariant by scaling, that is, given $\varphi\in \mathring{H}^{1/2}(\R^2,\C^2)$, there holds
\be\label{eq:invariant}
\int_{\R^2}\langle\cD\varphi_\delta,\varphi_\delta\rangle\,dx=\int_{\R^2}\langle\cD\varphi,\varphi\rangle\,dx\,,\quad\int_{\R^2}\vert\varphi_\delta\vert^4\,dx=\int_{\R^2}\vert\varphi\vert^4\,dx\,.
\ee

Moreover, as proved in \cite[Section 4]{isobecritical}, those terms are invariant with respect to a conformal change of metric.
\smallskip

Given $S\in\Z$, solutions to \eqref{eq:limitequation} of the form \eqref{eq:ansatz} have been classified in \cite{borrellifrank}. They are given, up scaling \eqref{eq:scaling} and sign change, by
\be
\Psi(r,\theta)=\begin{pmatrix}
v(r) e^{\I S\theta} \\
\I u(r) e^{\I (S+1)\theta}
\end{pmatrix}\,, 
\ee
with
\be\label{eq:Sbubbles}
\quad u(r)=\sigma\sqrt{2|2S+1|} \frac{r^S}{r^{2S+1}+r^{-(2S+1)}}\,,\quad v(r)=\tau\sqrt{2|2S+1|} \frac{r^{-S-1}}{r^{2S+1}+r^{-(2S+1)}}\,,
\ee
where $\sigma,\tau\in\{-1,1\}$ and $\sigma=\tau$ if $S\geq0$ and $\sigma=-\tau$ if $S<0$.

Those solutions being critical points of \eqref{eq:conformalfunctional}, a straightforward computation gives
\be\label{eq:Senergy}
\beta(S):=\cL_0(\Psi)=\frac{1}{4}\int_{\R^2}\vert\Psi\vert^4\,dx=\vert 2S+1\vert\pi\,.
\ee
As explained in Section \ref{sec:compactness} the above energy value is the threshold for the appearance of the blow-up profiles in the Palais-Smale sequences for the functional $\cL_\omega$, see \eqref{eq:action}.
\smallskip

We add that, more generally, solutions of the form \eqref{eq:ansatz} to the limit equation with the nonlinearity \eqref{eq:generalcubic} have also been characterized in \cite{borrellifrank}. Moreover, ground state solutions to the higher dimensional analogue of \eqref{eq:limitequation} have been recently classified in \cite{borrellimalchiodiwu}.
\section{The dual action }\label{sec:dual}
Following the idea of \cite{isobecritical}, we employ duality techniques, exploiting the convexity of the nonlinear term in \eqref{eq:action}. This allows to study an equivalent problem involving a \emph{dual functional}, whose critical points ( and, more generally, whose Palais-Smale sequences) are in one-to-one correspondence with those of the original functional. In particular, we also exploit the fact that the dual functional has a mountain pass geometry.
\smallskip

Let $$\cDmo:=(\cD+m\sigma_3-\omega).$$
The following isomorphisms hold
\begin{equation}
\cDmo :H^{\frac{1}{2}}(\R^2,\C^2)\longrightarrow H^{-\frac{1}{2}}(\R^2,\C^2)
\end{equation}
and 
\begin{equation}
\cDmo :W^{1,4/3}(\R^2,\C^2)\longrightarrow L^{4/3}(\R^2,\C^2)\,.
\end{equation}

Let $A_{\omega}$ and $B_{\omega}$ be the inverse operators, respectively, that is 
\begin{equation}\label{A}
A_{\omega}:=(\cDmo)^{-1}:H^{-\frac{1}{2}}(\R^2,\C^2)\longrightarrow H^{\frac{1}{2}}(\R^2,\C^2)
\end{equation}
and
\begin{equation}
B_{\omega}:=(\cDmo)^{-1}: L^{4/3}(\R^2,\C^2)\longrightarrow W^{1,4/3}(\R^2,\C^2).
\end{equation}
We denote by
\begin{equation}
i:H^{\frac{1}{2}}(\R^2,\C^2)\longrightarrow L^4(\R^2,\C^2)
\end{equation}
and 
\begin{equation}
j: W^{1,4/3}(\R^2,\C^2)\longrightarrow H^{\frac{1}{2}}(\R^2,\C^2)
\end{equation}
the Sobolev embeddings.

Consider the following sequences of maps 
\begin{equation}
\begin{tikzcd}
K_{\omega}:L^{4/3}(\R^2,\C^2)\arrow[r,"i^{*}"]& H^{-\frac{1}{2}}(\R^2,\C^2)\arrow[r,"A_{\omega}"]& H^{\frac{1}{2}}(\R^2,\C^2)\arrow[r,"i"]& L^{4}(\R^2,\C^2)
\end{tikzcd}
\end{equation}
and 
\begin{equation}
\begin{tikzcd}
L^{4/3}(\R^2,\C^2)\arrow[r,"B_{\omega}"]& W^{1,4/3}(\R^2,\C^2)\arrow[r,"j"]&H^{\frac{1}{2}}(\R^2,\C^2),
\end{tikzcd}
\end{equation}
where $i^{*}:L^{4/3}(\R^2,\C^2)\rightarrow H^{-\frac{1}{2}}(\R^2,\C^2)$ is the adjoint of $i$.
Then we have
\begin{equation}
A_{\omega}\circ i^{*}=j\circ B_{\omega},
\end{equation}
and since $\cD$ is self-adjoint we also have 
\begin{equation}
K^{*}_{\omega}=K_{\omega}\,.
\end{equation}

The functional $\mathcal{L}_{\omega}$ is then defined as
$$\mathcal{L}_{\omega}(\psi)=\frac{1}{2}\langle\cDmo\psi,\psi\rangle_{H^{-\frac{1}{2}}\times H^{\frac{1}{2}}}-\mathcal{H}(i(\psi))$$
for $\psi\in H^{\frac{1}{2}}$, where $\langle\cdot,\cdot\rangle_{H^{-\frac{1}{2}}\times H^{\frac{1}{2}}}$ is the duality pairing between $H^{-\frac{1}{2}}(\R^2,\C^2)$ and $H^{\frac{1}{2}}(\R^2,\C^2)$ and $\mathcal{H}$ is the functional on $L^{4}(\R^2,\C^2)$ defined by
$$\mathcal{H}(\psi)=\frac{1}{4}\int_{\R^2}\vert\psi\vert^{4}\,dx$$

The differential of the functional $\mathcal{L}_{\omega}$ then reads as 
\begin{equation}\label{dualdiff}
d\mathcal{L}_{\omega}(\psi)=\cDmo\psi-i^{*}d\mathcal{H}(i(\psi))\in H^{-\frac{1}{2}}(\R^2,\C^2)\,.
\end{equation}
for $\psi\in H^{\frac{1}{2}}(\R^2,\C^2)$.

The \textit{Legendre transform} (see \cite{criticalhamiltonian}) $\mathcal{H}^{*}$ of $\mathcal{H}$ is the functional on $L^{4/3}(\R^2,\C^2)$ defined by 
\begin{equation}
\begin{split}
\mathcal{H}^{*}(\varphi)&=\max\{\langle\psi,\varphi\rangle_{L^{4}\times L^{4/3}}-\mathcal{H}(\psi)\;:\; \psi\in L^{4}(\R^2,\C^2)\} \\& =\frac{1}{2^{+}}\int_{\R^2}\vert\varphi\vert^{4}\,dx.
\end{split}
\end{equation}
We see that $d\mathcal{H}^{*}$ is the inverse of $d\mathcal{H}$, that is
\begin{equation}\label{inversediff}
d\mathcal{H}\circ d\mathcal{H}^{*}=1_{L^{4/3}},\qquad d\mathcal{H}^{*}\circ d\mathcal{H}=1_{L^{4}}.
\end{equation}
Then the dual functional $\mathcal{L}^{*}_{\omega}$ is defined as 
\begin{equation}
\begin{split}
\mathcal{L}^{*}_{\omega}(\varphi)&=\mathcal{H}^{*}(\varphi)-\frac{1}{2}\langle K_{\omega}\varphi,\varphi\rangle_{L^{4}\times L^{4/3}}\\&=\frac{3}{4}\int_{\R^2}\vert\varphi\vert^{4/3}\,dx-\frac{1}{2}\int_{\R^2}\langle K_{\omega}\varphi,\varphi\rangle\,dx\,,
\end{split}
\end{equation}
for $\varphi\in L^{4/3}(\R^2,\C^2)$. It is not hard to see that $\mathcal{L}^{*}_{\omega}$ is of class $C^{1}$.

A relevant property of the dual functional $\mathcal{L}^{*}_{\omega}$ is that its critical points and Palais-Smale sequences are in one-to-one correspondence with the ones of $\mathcal{L}_{\omega}$.
\begin{lemma}\label{lem:correspondence}
There is a one-to-one correspondence between the critical points of $\mathcal{L}_{\omega}$ in $H^{\frac{1}{2}}(\R^2,\C^2)$ and the critical points of $\mathcal{L}^{*}_{\omega}$ in $L^{4/3}(\R^2,\C^2)$.
\end{lemma}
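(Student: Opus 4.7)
The plan is to exhibit an explicit bijection between the two sets of critical points, using the Legendre duality identities \eqref{inversediff} as the algebraic engine. Concretely, I introduce forward and backward maps
\[
\Phi(\psi) := d\mathcal{H}(i(\psi)) = |\psi|^{2}\psi, \qquad \Psi(\varphi) := A_{\omega}\,i^{*}\varphi = j\circ B_{\omega}(\varphi),
\]
and verify that each sends critical points to critical points and that $\Phi\circ\Psi=\mathrm{id}$, $\Psi\circ\Phi=\mathrm{id}$ on the respective critical sets. The map $\Phi$ is well-defined from $H^{1/2}$ to $L^{4/3}$ by the Sobolev embedding $i$ together with Hölder's inequality ($\||\psi|^2\psi\|_{L^{4/3}}=\|i(\psi)\|_{L^4}^3$), while $\Psi$ is well-defined into $H^{1/2}$ by the isomorphism \eqref{A} applied to $i^{*}\varphi\in H^{-1/2}$.

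For the forward direction, assume $\psi\in H^{1/2}$ satisfies $d\mathcal{L}_{\omega}(\psi)=0$. From \eqref{dualdiff} this reads $\cDmo\psi = i^{*}d\mathcal{H}(i(\psi))$ in $H^{-1/2}$, so $\varphi:=\Phi(\psi)$ satisfies $\cDmo\psi = i^{*}\varphi$. Inverting $\cDmo$ with $A_{\omega}$ and composing with $i$ yields $i(\psi)=K_{\omega}\varphi$. On the other hand, \eqref{inversediff} gives $d\mathcal{H}^{*}(\varphi)=d\mathcal{H}^{*}(d\mathcal{H}(i(\psi)))=i(\psi)$. Combining these two identities, $d\mathcal{H}^{*}(\varphi)=K_{\omega}\varphi$, which is exactly $d\mathcal{L}^{*}_{\omega}(\varphi)=0$.

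Conversely, let $\varphi\in L^{4/3}$ be a critical point of $\mathcal{L}^{*}_{\omega}$, so $d\mathcal{H}^{*}(\varphi)=K_{\omega}\varphi$, and set $\psi:=\Psi(\varphi)\in H^{1/2}$. Then by construction $\cDmo\psi = i^{*}\varphi$ and $i(\psi)=iA_{\omega}i^{*}\varphi=K_{\omega}\varphi=d\mathcal{H}^{*}(\varphi)$; applying $d\mathcal{H}$ and using \eqref{inversediff} recovers $\varphi=d\mathcal{H}(i(\psi))$, so $\cDmo\psi=i^{*}d\mathcal{H}(i(\psi))$, i.e. $d\mathcal{L}_{\omega}(\psi)=0$. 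Bijectivity is now a tautology: $\Phi(\Psi(\varphi))=d\mathcal{H}(i(\psi))=\varphi$ by the identity just derived, and $\Psi(\Phi(\psi))=A_{\omega}i^{*}\varphi = A_{\omega}(\cDmo\psi)=\psi$.

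There is no genuine obstacle here; the lemma is essentially formal, a direct consequence of the pair of Legendre identities \eqref{inversediff} combined with the self-adjointness-based identity $A_{\omega}\circ i^{*}=j\circ B_{\omega}$ recorded before the statement. The only care required is the functional-analytic bookkeeping, namely checking that $\Phi$ lands in $L^{4/3}$ and $\Psi$ lands in $H^{1/2}$, both of which are immediate from the embeddings $i$, $j$ and the isomorphisms $A_{\omega}$, $B_{\omega}$.
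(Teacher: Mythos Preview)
Your proof is correct and follows essentially the same approach as the paper: both define the forward map $\psi\mapsto d\mathcal{H}(i(\psi))$ and backward map $\varphi\mapsto A_{\omega}i^{*}\varphi$, then use the Legendre identities \eqref{inversediff} together with $K_{\omega}=iA_{\omega}i^{*}$ to check that critical points are sent to critical points. Your version is marginally more complete in that you explicitly verify $\Phi\circ\Psi=\mathrm{id}$ and $\Psi\circ\Phi=\mathrm{id}$ on the critical sets, whereas the paper stops after showing each map sends critical points to critical points.
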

\begin{proof}
Let $\psi\in H^{\frac{1}{2}}$ be a critical point of $\mathcal{L}_{\omega}$. Then by \eqref{dualdiff}, we have $\cDmo\psi=i^{*}d\mathcal{H}(i(\psi))$. Define $\varphi:=d\mathcal{H}(i(\psi))\in L^{4/3}$, so that $\cDmo\psi=i^{*}\varphi$. This implies that $\psi=A_{\omega}\circ i^{*}(\varphi)$ and
\begin{equation}\label{13}
i(\psi)=i\circ A_{\omega}\circ i^{*}(\varphi)=K_{\omega}.
\end{equation}
On the other hand, by~\eqref{inversediff} we have
\begin{equation}\label{14}
i(\psi)=d\mathcal{H}^{*}(\varphi).
\end{equation}
Combining \eqref{13} and \eqref{14} we obtain
$$d\mathcal{L}^{*}_{\omega}(\varphi)=d\mathcal{H}^{*}(\varphi)-K_{\omega}(\varphi)=0. $$
and then $\varphi$ is a critical point of $\mathcal{L}^{*}_{\omega}$.

Conversely, suppose $\varphi\in L^{4/3}$ is a critical point of $\mathcal{L}^{*}_{\omega}$ and define $\psi=A_{\omega}\circ i^{*}(\varphi)\in H^{\frac{1}{2}}$. Since $\varphi$ is a critical point, we have $d\mathcal{H}^{*}(\varphi)-K_{\omega}(\varphi)=0$. This and~\eqref{inversediff} imply that 
\begin{equation}
\varphi=d\mathcal{H}\circ K_{\omega}(\varphi)=d\mathcal{H}\circ i\circ A_{\omega}\circ i^{*}(\varphi)=d\mathcal{H}(i(\psi)).
\end{equation}
Then we have $i^{*}(\varphi)=i^{*}\circ d\mathcal{H}(i(\psi))$ and $d\mathcal{L}_{\omega}(\psi)=\cDmo\psi=i^{*}\circ d\mathcal{H}(i(\psi))$, that is $\psi$ is a critical point of $\mathcal{L}_{\omega}$. This concludes the proof.
\end{proof}
Moreover, there also exists a one-to-one correspondence between Palais-Smale sequences for $\mathcal{L}_{\omega}$ and $\mathcal{L}^{*}_{\omega}$. We refer the reader to \cite[Section 3 ]{isobecritical}

 \section{Compactness analysis}\label{sec:compactness}
In this section we analyze the compactness properties of the functional $\cL_\omega$, restricted to the subspace $E_S$. As explained in the previous section, the same results hold for the dual functional $\cL^*_\omega$. The arguments employed rely on Concentration-Compactness theory, for which the reader can refer to  \cite{CCcompactI,CClimitI} for a general exposition and to \cite{Palatucci-Pisante} for the case of fractional Sobolev spaces needed in the sequel.
 \medskip
 
Let $(\psi_n)_{n\in\N}\subseteq H^{1/2}(\R^2,\C^2)$ be a Palais-Smale sequence for $\cL_\omega$ at level $c>0$, that is
\be\label{eq:PS}
\cL_\omega(\psi_n)\rightarrow c\,,\quad \dd{\cL_\omega}(\psi_n)\xrightarrow{H^{-1/2}}0\,, 
\ee
as $n\rightarrow\infty$. It is not hard to see that $\psi_n$ is bounded. 
\begin{lemma}\label{lem:boundedPS}
Any Palais-Smale for $\cL_\omega$ is bounded.
\end{lemma}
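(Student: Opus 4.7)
The key identity to exploit is that subtracting a suitable multiple of $\langle d\cL_\omega(\psi_n),\psi_n\rangle$ from $\cL_\omega(\psi_n)$ cancels the quadratic (indefinite) part and isolates the $L^4$ norm. Concretely,
\[
\cL_\omega(\psi_n) - \tfrac{1}{2}\langle d\cL_\omega(\psi_n),\psi_n\rangle \;=\; \tfrac{1}{4}\int_{\R^2}|\psi_n|^4\,dx.
\]
Using the Palais–Smale hypotheses \eqref{eq:PS} together with the duality bound $|\langle d\cL_\omega(\psi_n),\psi_n\rangle|\leq \|d\cL_\omega(\psi_n)\|_{H^{-1/2}}\|\psi_n\|_{H^{1/2}}$, this yields the first key estimate
\[
\|\psi_n\|_{L^4}^{4}\;\leq\; C\bigl(1+\|\psi_n\|_{H^{1/2}}\bigr),
\]
valid for $n$ large.

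Next, I would use the Euler–Lagrange equation implicit in the PS condition to go back from $L^4$ control to $H^{1/2}$ control. Since $\omega\in(-m,m)$ lies in the gap of $\spec(\cD+m\sigma_3)$ by \eqref{eq:spectrum}, the operator $\cDmo$ is an isomorphism $H^{1/2}(\R^2,\C^2)\to H^{-1/2}(\R^2,\C^2)$ (the operator $A_\omega$ of Section \ref{sec:dual}). Rewriting
\[
\cDmo\psi_n \;=\; |\psi_n|^{2}\psi_n + d\cL_\omega(\psi_n)
\]
and applying $A_\omega$, together with the Sobolev embedding $H^{1/2}(\R^2,\C^2)\hookrightarrow L^4(\R^2,\C^2)$ and its dual $L^{4/3}(\R^2,\C^2)\hookrightarrow H^{-1/2}(\R^2,\C^2)$, I obtain
\[
\|\psi_n\|_{H^{1/2}} \;\leq\; C\,\bigl\||\psi_n|^{2}\psi_n\bigr\|_{L^{4/3}} + o(1) \;=\; C\,\|\psi_n\|_{L^{4}}^{3} + o(1).
\]

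Combining the two inequalities gives
\[
\|\psi_n\|_{H^{1/2}} \;\leq\; C\bigl(1+\|\psi_n\|_{H^{1/2}}\bigr)^{3/4}+o(1),
\]
and since the exponent $3/4$ is strictly less than $1$, this forces $\|\psi_n\|_{H^{1/2}}$ to remain bounded as $n\to\infty$, which is the desired conclusion. The only nontrivial ingredient is the invertibility of $\cDmo$ on the scale of $H^{\pm 1/2}$, which is already recorded in Section \ref{sec:dual}; the rest is elementary manipulation of the PS relations together with the critical Sobolev embedding. I do not see any serious obstacle here, as the argument is purely algebraic once the gap condition $|\omega|<m$ is used; the real compactness difficulties, which are genuinely delicate and involve the bubbles \eqref{eq:Sbubbles}, will arise only at the next stage, when one tries to upgrade boundedness to strong convergence below the energy threshold $\beta(S)$ of \eqref{eq:Senergy}.
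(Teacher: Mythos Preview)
Your argument is correct and essentially identical to the paper's: both combine the identity $\cL_\omega(\psi_n)-\tfrac{1}{2}\langle d\cL_\omega(\psi_n),\psi_n\rangle=\tfrac{1}{4}\|\psi_n\|_{L^4}^4$ with the estimate $\|\psi_n\|_{H^{1/2}}\lesssim\|\psi_n\|_{L^4}^3+o(1)$ obtained by inverting $\cDmo$ and using the dual Sobolev embedding $L^{4/3}\hookrightarrow H^{-1/2}$, and then close via $\|\psi_n\|_{H^{1/2}}\leq C(1+\|\psi_n\|_{H^{1/2}})^{3/4}$. The only difference is the order in which you present the two ingredients.
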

\begin{proof}
There holds
\[
(\cD+m\sigma_3-\omega)\psi_n=\vert\psi_n\vert^2\psi_n+o(1)\,,\qquad\mbox{in $H^{-\frac{1}{2}}(\R^2,\C^2)$}\,,
\]
and thus
\be\label{eq:psi_n}
\psi_n=(\cD+m\sigma_3-\omega)^{-1}(\vert\psi_n\vert^2\psi_n)+o(1)\,,\qquad\mbox{in $H^{\frac{1}{2}}(\R^2,\C^2)$}\,.
\ee
From this we get
\[
\Vert\psi_n\Vert_{H^{1/2}}\lesssim\Vert \vert\psi_n\vert^2\psi_n\Vert_{H^{-1/2}}+o(1)\,,
\]
and by the Sobolev embedding $H^{\frac{1}{2}}(\R^2,\C^2)\hookrightarrow L^4(\R^2,\C^2)$, there holds $L^{\frac{4}{3}}(\R^2,\C^2)=(L^4(\R^2,\C^2))^*\hookrightarrow H^{-\frac{1}{2}}(\R^2,\C^2)$, and then
\[
\Vert \vert\psi_n\vert^2\psi_n\Vert_{H^{-1/2}}\lesssim\Vert \vert\psi_n\vert^2\psi_n\Vert_{L^{4/3}}=\Vert\psi_n\Vert^3_{L^4}\,.
\]
Moreover, by \eqref{eq:PS} we deduce that
\[
\begin{split}
\frac{1}{4}\int_{\R^2}\vert\psi_n\vert^4\,dx&=\cL_\omega(\psi_n)-\frac{1}{2}\langle d\cL_\omega(\psi_n),\psi_n\rangle_{H^{-1/2}\times H^{1/2}} \\
&\leq C+\Vert\psi_n\Vert_{H^{1/2}}\,,
\end{split}
\]
and then, combining the above observations we find
\[
\Vert\psi_n\Vert_{H^{1/2}}\leq C\Vert\psi_n\Vert^3_{L^4}\leq C(1+\Vert\psi_n\Vert_{H^{1/2}})^{3/4}\,,
\]
and the claim follows.
\end{proof}

Then, up to subsequences, we have
\be\label{eq:weakH^1/2}
\psi_n\rightharpoonup\psi_\infty\,,\qquad\mbox{weakly in $H^{1/2}(\R^2,\C^2)$}\,,
\ee
and
\begin{align}\label{eq:L^p-convergence}
 &\psi_n\to\psi_\infty\,,\qquad\mbox{strongly in $L^{p}_{loc}(\R^2,\C^2)$, for $2\leq p<4$}\,, \\
&\psi_n\rightharpoonup\psi_\infty\,,\qquad\mbox{weakly in $L^{4}(\R^2,\C^2)$}\,,
\end{align}
as $n\to\infty$. 

The strong $H^{1/2}$-convergence of Palais-Smale sequences is a priori prevented by the invariance by translation of the functional, and by the presence of a critical nonlinearity. More precisely, we need to prove \emph{strong} convergence in the $L^4$ norm.
\begin{proposition}\label{prop:strong}
Assume 
\be\label{eq:L4-strong}
\psi_n\to\psi_\infty\,,\qquad\mbox{strongly in $L^{4}(\R^2,\C^2)$}\,,
\ee
then
\be\label{eq:Sobolev-strong}
\psi_n\to\psi_\infty\,,\qquad\mbox{strongly in $H^{1/2}(\R^2,\C^2)$}\,.
\ee
\end{proposition}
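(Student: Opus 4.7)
The plan is to upgrade strong $L^4$ convergence to strong $H^{1/2}$ convergence by a direct bootstrap using the resolvent of $\cDmo$ together with the pointwise structure of the cubic nonlinearity.

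First, I would use the Palais-Smale condition to invert the Dirac operator. Since $\omega\in(-m,m)$ lies in the resolvent set of $\cD+m\sigma_3$ (by the spectrum description \eqref{eq:spectrum}), $\cDmo$ is an isomorphism from $H^{1/2}$ onto $H^{-1/2}$, as already used in \eqref{eq:psi_n}. From \eqref{eq:PS} one has
\[
\psi_n=A_\omega\bigl(\vert\psi_n\vert^2\psi_n\bigr)+o(1)\qquad\text{in }H^{1/2}(\R^2,\C^2).
\]
The weak limit $\psi_\infty$ is a critical point of $\cL_\omega$ (by passing to the limit in $d\cL_\omega(\psi_n)\to 0$ against test spinors, which is legitimate thanks to the local $L^p$ convergence \eqref{eq:L^p-convergence} for $p<4$), hence it satisfies the same identity without the remainder. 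Subtracting,
\[
\psi_n-\psi_\infty=A_\omega\bigl(\vert\psi_n\vert^2\psi_n-\vert\psi_\infty\vert^2\psi_\infty\bigr)+o(1)\qquad\text{in }H^{1/2}.
\]

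Second, I would estimate the right-hand side. Since $A_\omega:H^{-1/2}\to H^{1/2}$ is bounded and $L^{4/3}\hookrightarrow H^{-1/2}$ by Sobolev duality, I get
\[
\Vert\psi_n-\psi_\infty\Vert_{H^{1/2}}\lesssim \bigl\Vert \vert\psi_n\vert^2\psi_n-\vert\psi_\infty\vert^2\psi_\infty\bigr\Vert_{L^{4/3}}+o(1).
\]
Using the elementary pointwise inequality $\bigl\vert\vert a\vert^2 a-\vert b\vert^2 b\bigr\vert\lesssim (\vert a\vert^2+\vert b\vert^2)\vert a-b\vert$ for $a,b\in\C^2$ together with H\"older's inequality with exponents $(2,4)$, which matches $4/3$, I obtain
\[
\bigl\Vert \vert\psi_n\vert^2\psi_n-\vert\psi_\infty\vert^2\psi_\infty\bigr\Vert_{L^{4/3}}\lesssim \bigl(\Vert\psi_n\Vert_{L^4}^{2}+\Vert\psi_\infty\Vert_{L^4}^{2}\bigr)\Vert\psi_n-\psi_\infty\Vert_{L^4}.
\]

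Finally, by Lemma \ref{lem:boundedPS} the Palais-Smale sequence is bounded in $H^{1/2}$ and hence in $L^4$, so the prefactor is uniformly bounded; the hypothesis \eqref{eq:L4-strong} then makes the last factor tend to zero. Combining the two displays gives $\Vert\psi_n-\psi_\infty\Vert_{H^{1/2}}\to 0$, which is the claim. There is no real obstacle here: the point is simply to recognise that the critical Sobolev pairing $(H^{1/2},L^4,L^{4/3},H^{-1/2})$ is the natural one for this cubic nonlinearity, so that $L^4$-strong convergence plus a uniform $L^4$-bound is exactly what is needed to close the estimate via the resolvent of $\cDmo$.
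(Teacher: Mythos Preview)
Your proof is correct and follows essentially the same strategy as the paper: both reduce the claim to showing $\vert\psi_n\vert^2\psi_n\to\vert\psi_\infty\vert^2\psi_\infty$ in $L^{4/3}$ and then apply the resolvent identity \eqref{eq:psi_n}. The only difference is in how that $L^{4/3}$ convergence is obtained: the paper splits the difference as $\vert\psi_n\vert^2(\psi_n-\psi_\infty)+(\vert\psi_n\vert^2-\vert\psi_\infty\vert^2)\psi_\infty$ and first proves the intermediate fact $\vert\psi_n\vert^2\to\vert\psi_\infty\vert^2$ in $L^2$, whereas you use the single pointwise inequality $\bigl\vert\vert a\vert^2 a-\vert b\vert^2 b\bigr\vert\lesssim(\vert a\vert^2+\vert b\vert^2)\vert a-b\vert$ combined with H\"older, which is slightly more direct but otherwise equivalent.
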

\begin{proof}
We claim that 
\be\label{eq:square}
\vert\psi_n\vert^2\psi_n\to\vert\psi_\infty\vert^2\psi_\infty\,,\qquad \mbox{strongly in $L^{4/3}(\R^2,\C^2)$}\,.
\ee 

Preliminarily, observe that, up to subsequences, 
\be\label{eq:L^2}
\vert\psi_n\vert^2\to\vert\psi_\infty\vert^2\,,\qquad  \mbox{strongly in $L^2(\R^2,\C^4)$.}
\ee
 Indeed, the sequence is bounded in $L^2$ as $\Vert\vert\psi_n\vert^2\Vert_{L^2}=\Vert\psi_n\Vert^2_{L^4}\leq C$, uniformly in $n\in\N$. Then $\vert\psi_n\vert^2\rightharpoonup \vert\psi_\infty\vert^2$, weakly in $L^2$. Moreover, $\Vert\vert\psi_n\vert^2\Vert_{L^2}=\Vert\psi_n\Vert^2_{L^4}\to\Vert\psi_\infty\Vert^2_{L^4}=\Vert\vert\psi_\infty\vert^2\Vert_{L^2}$, by \eqref{eq:L4-strong}, and the $L^2$ strong convergence follows.
 
 There holds
 \[
 \Vert \vert\psi_n\vert^2\psi_n-\vert\psi_\infty\vert^2\psi_\infty\Vert_{L^{4/3}}\leq  \Vert \vert\psi_n\vert^2\psi_n-\vert\psi_n\vert^2\psi_\infty\Vert_{L^{4/3}}+ \Vert \vert\psi_n\vert^2\psi_\infty-\vert\psi_\infty\vert^2\psi_\infty\Vert_{L^{4/3}}
 \]

The H\"older inequality and \eqref{eq:L4-strong} give
\be
\begin{split}
 \Vert \vert\psi_n\vert^2\psi_n-\vert\psi_n\vert^2\psi_\infty\Vert^{4/3}_{L^{4/3}}&=\int_{\R^2}\vert\vert\psi_n\vert^2\psi_n-\vert\psi_n\vert^2\psi_\infty \vert^{\frac{4}{3}}\,dx \\
  &\leq\left(\int_{\R^2}\vert\psi_n\vert^4 \,dx\right)^{\frac{2}{3}}\left(\int_{\R^2}\vert\psi_n-\psi_\infty\vert^4 \,dx\right)^{\frac{1}{3}}\\
  &\leq C\left(\int_{\R^2}\vert\psi_n-\psi_\infty\vert^4 \,dx\right)^{\frac{1}{3}}=o(1)\,.
\end{split}
\ee
Similarly, by H\"older and \eqref{eq:L^2} we get
\be
\begin{split}
 \Vert \vert\psi_n\vert^2\psi_\infty-\vert\psi_\infty\vert^2\psi_\infty\Vert_{L^{4/3}}&=\int_{\R^2}\vert\vert\psi_n\vert^2\psi_\infty-\vert\psi_\infty\vert^2\psi_\infty \vert^{\frac{4}{3}}\,dx\\
  & \leq\left( \int_{\R^2}\vert\vert\psi_n\vert^2-\vert\psi_\infty\vert^2 \vert^2\,dx\right)^{\frac{2}{3}}\left( \int_{\R^2}\vert\psi_\infty \vert^4\,dx\right)^{\frac{1}{3}}=o(1)\,.
\end{split}
\ee
This proves \eqref{eq:square} and thus \eqref{eq:Sobolev-strong} follows, by \eqref{eq:psi_n}.
\end{proof}
\begin{proposition}\label{prop:limitsolution}
The spinor $\psi_\infty\in H^{1/2}(\R^2,\C^2)$ is a weak solution to \eqref{criticaldirac}.
\end{proposition}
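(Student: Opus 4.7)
The plan is to verify that $d\cL_\omega(\psi_\infty)=0$ in $H^{-1/2}(\R^2,\C^2)$, by passing to the limit in the Palais--Smale relation $\langle d\cL_\omega(\psi_n),\phi\rangle\to 0$ tested against $\phi\in C^\infty_c(\R^2,\C^2)$, and then extending by density to all $\phi\in H^{1/2}(\R^2,\C^2)$.

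Fix such a $\phi$. The differential splits as
\[
\langle d\cL_\omega(\psi_n),\phi\rangle=\int_{\R^2}\langle\psi_n,(\cD+m\sigma_3-\omega)\phi\rangle\,dx-\int_{\R^2}\vert\psi_n\vert^2\langle\psi_n,\phi\rangle\,dx,
\]
and I would handle the two integrals separately. The linear term converges to its analogue with $\psi_\infty$ either from the weak $H^{1/2}$-convergence \eqref{eq:weakH^1/2} paired with $(\cD+m\sigma_3-\omega)\phi\in H^{-1/2}$, or more directly from the strong $L^2_{\loc}$-convergence in \eqref{eq:L^p-convergence}, since $\phi$ has compact support.

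The nonlinear term is the delicate one. The goal is to establish the weak convergence
\[
\vert\psi_n\vert^2\psi_n\rightharpoonup\vert\psi_\infty\vert^2\psi_\infty\qquad\text{in }L^{4/3}(\R^2,\C^2).
\]
Boundedness of this sequence in $L^{4/3}$ is immediate from $\Vert\vert\psi_n\vert^2\psi_n\Vert_{L^{4/3}}=\Vert\psi_n\Vert_{L^4}^3$ together with Lemma \ref{lem:boundedPS}. Up to extraction, the strong $L^2_{\loc}$-convergence in \eqref{eq:L^p-convergence} yields $\psi_n\to\psi_\infty$ almost everywhere, so the cubic map gives $\vert\psi_n\vert^2\psi_n\to\vert\psi_\infty\vert^2\psi_\infty$ a.e.\ as well. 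The classical fact that a.e.\ convergence combined with $L^q$-boundedness for $1<q<\infty$ forces weak $L^q$-convergence then identifies the weak limit. Pairing with $\phi\in L^4$ (via the Sobolev embedding) gives the desired convergence of the second integral.

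Combining the two, $\langle d\cL_\omega(\psi_\infty),\phi\rangle=0$ for every $\phi\in C^\infty_c(\R^2,\C^2)$, and hence for every $\phi\in H^{1/2}(\R^2,\C^2)$ by density. The only point requiring some care is the identification of the weak $L^{4/3}$-limit of the cubic nonlinearity, but this becomes routine once the a.e.\ convergence along a subsequence is secured. Note also that strong convergence in $L^4$ is \emph{not} needed for this proposition, which is consistent with the compactness obstruction described above: $\psi_\infty$ is always a weak solution, while the strong convergence addressed in Proposition \ref{prop:strong} is the extra ingredient that would upgrade the Palais--Smale sequence to a genuine convergent one.
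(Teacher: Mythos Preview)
Your argument is correct and follows essentially the same route as the paper: test the Palais--Smale relation against $\phi\in C^\infty_c$, pass the linear terms to the limit via \eqref{eq:L^p-convergence}, and handle the cubic term by local convergence. The paper is terser about the nonlinear term (it just invokes \eqref{eq:L^p-convergence} directly), whereas you spell out the mechanism---a.e.\ convergence plus $L^{4/3}$-boundedness forcing weak $L^{4/3}$-convergence---which is a welcome clarification of the same idea.
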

\begin{proof}
Take $\varphi\in C^\infty_c(\R^2,\C^2)$. Since $\psi_n$ is a Palais-Smale sequence for $\cL_\omega$, we get
\be\label{eq:PStested}
\begin{split}
o(1)&=\langle d\cL_\omega(\psi_n),\varphi\rangle_{H^{-\frac{1}{2}}\times H^{\frac{1}{2}}} \\
 &=\int_{\R^2}\langle\psi_n,\cD\varphi\rangle\,dx+\int_{\R^2}\langle(m\sigma_3-\omega)\psi_n,\varphi\rangle\,dx-\int_{\R^2}\vert\psi_n\vert^2\langle\psi_n,\varphi\rangle\,dx\,.
\end{split}
\ee
By \eqref{eq:L^p-convergence} we easily get that the first, second and third terms on the right-hand side of \eqref{eq:PStested} converge to $\int_{\R^2}\langle\psi_\infty,\cD\varphi\rangle\,dx,\int_{\R^2}\langle(m\sigma_3-\omega)\psi_\infty,\varphi\rangle\,dx,\int_{\R^2}\vert\psi_\infty\vert^2\langle\psi_\infty,\varphi\rangle\,dx$, respectively, as $n\to\infty$.
\end{proof}

Choose $S\in\Z, S\neq0$ and consider a Palais-Smale sequence $(\psi_n)_n$ for $\cL_\omega$ restricted to the subspace $E_S$ of symmetric spinors of the form \eqref{eq:ansatz}. This ansatz breaks the invariance by translations and allows to prove that the only possible concentration point is given by the origin, as shown in the following
\begin{lemma}
There exists $\nu\geq0$ such that 
\be\label{eq:measure-limit}
\vert\psi_n\vert^4\,dx\overset{\ast}{\rightharpoonup}\vert\psi_\infty\vert^4\,dx+\nu\delta_0\,,\qquad\mbox{in $\mathcal{M}(\R^2)$}\,,
\ee
where $\delta_0$ is the delta measure concentrated at the origin.
\end{lemma}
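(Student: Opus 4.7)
The plan is to combine the fractional counterpart of Lions' second concentration-compactness principle with the rotational symmetry built into the ansatz \eqref{eq:ansatz}. Since $(\psi_n)_n$ is Palais-Smale, Lemma \ref{lem:boundedPS} gives a uniform bound in $H^{1/2}(\R^2,\C^2)$, hence a uniform bound of $|\psi_n|^4$ in $L^1(\R^2)$. Up to subsequences, $|\psi_n|^4\,dx$ then converges weak-$\ast$ in $\mathcal{M}(\R^2)$ to a finite positive Radon measure $\mu$. By the concentration-compactness lemma of the second type for the critical embedding $H^{1/2}(\R^2)\hookrightarrow L^4(\R^2)$, as formulated in \cite{Palatucci-Pisante}, together with the a.e. convergence $\psi_n\to\psi_\infty$ following from \eqref{eq:L^p-convergence}, one obtains the decomposition
\[
\mu=|\psi_\infty|^4\,dx+\sum_{j\in J}\nu_j\,\delta_{x_j},
\]
for an at most countable index set $J$, points $x_j\in\R^2$ and weights $\nu_j>0$.

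The next step is to rule out every concentration point other than the origin by exploiting the symmetry. Any $\psi\in E_S$ of the form \eqref{eq:ansatz} satisfies
\[
|\psi(r,\theta)|^2=u(r)^2+v(r)^2,
\]
so $|\psi_n|^4\,dx$ is rotationally invariant for every $n$. Weak-$\ast$ convergence then forces the limit measure $\mu$, and therefore its atomic part $\sum_{j\in J}\nu_j\delta_{x_j}$ (since $|\psi_\infty|^4\,dx$ is already radial), to be invariant under the natural action of $SO(2)$ on $\mathcal{M}(\R^2)$.

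Suppose now, for a contradiction, that some $x_{j_0}\neq 0$ belongs to the support of the atomic part. Rotational invariance forces every point of the orbit $\{R_\theta x_{j_0}:\theta\in[0,2\pi)\}$, which is an entire circle of radius $|x_{j_0}|>0$, to carry a Dirac mass of weight $\nu_{j_0}>0$. Since this orbit is uncountable, one would have $\mu(\R^2)=+\infty$, which contradicts the bound $\mu(\R^2)\le\liminf_n\|\psi_n\|_{L^4}^4<\infty$. Hence the only admissible atom sits at the origin, and the claim follows with $\nu:=\nu_0\geq 0$ (with the convention $\nu=0$ if no atom occurs).

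The main technical ingredient is the invocation of the second concentration-compactness principle in the fractional setting $H^{1/2}\hookrightarrow L^4$; once this is in hand, the symmetry argument that wipes out off-origin atoms is essentially immediate from the radial structure of $|\psi|^2$ for $\psi\in E_S$.
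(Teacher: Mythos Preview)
Your proposal is correct and follows essentially the same approach as the paper: invoke the fractional concentration-compactness result from \cite{Palatucci-Pisante} to obtain the at most countable atomic decomposition, then use the rotational symmetry of $\vert\psi_n\vert^4$ to rule out any atom away from the origin. The only cosmetic difference is that the paper derives the contradiction from the countability of the set $J$ of concentration points, whereas you derive it from the finiteness of the total mass; both arguments are equivalent.
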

\begin{proof}
By \cite[Theorem 5]{Palatucci-Pisante} there exists a (at most countable) set of distinct points $x_j\in\R^2$ and of numbers $\nu_j\geq0$, $j\in J$, such that
\be\label{eq:measure-concentration}
\vert\psi_n\vert^4\,dx\overset{\ast}{\rightharpoonup}\vert\psi_\infty\vert^4\,dx+\sum_{j\in J}\nu_j\delta_{x_j}\,,\qquad\mbox{in $\mathcal{M}(\R^2)$}\,,
\ee
where the $\delta_{x_j}$ are delta measures at $x_j$. We claim that $\vert J\vert=1$ and the only concentration point is $x=0$. Indeed, observe that spinors of the form \eqref{eq:ansatz} are invariant by the following $\mathbb{S}^1$-action. Given $\theta\in[0,2\pi)$, there holds
\be\label{eq:S1-action}
\begin{split}
\mathcal{R}_\theta(\psi(r,\varphi)):=\begin{pmatrix} e^{-\I S\theta} & \\ 0 & e^{-\I(S+1)\theta}\end{pmatrix}\begin{pmatrix}
v(r) e^{\I S(\varphi+\theta)} \\
\I u(r) e^{\I (S+1)(\varphi+\theta)}
\end{pmatrix} =\psi(r,\varphi)\,.
\end{split}
\ee
Then, given a point $x_j\neq0$ in \eqref{eq:measure-concentration}, by \eqref{eq:S1-action} $R_\theta x_j\neq x_j$ is also a concentration point, $R_\theta$ being the counterclockwise rotation of angle $\theta$ in $\R^2$. But this contradicts the fact that $J$ must be at most countable, and \eqref{eq:measure-limit} follows.
\end{proof}
\begin{remark}\label{rmk:strongL4}
Observe that if $\nu=0$ in \eqref{eq:measure-limit}, by reflexivity we get \emph{strong} convergence $\psi_n\to\psi_\infty$ in $L^4(\R^2,\C^2)$, as in that case we have \emph{weak} $L^4$ convergence and convergence of the norm, i.e. $\Vert\psi_n\Vert_{L^4}\to\Vert\psi_\infty\Vert_{L^4}$, as $n\to\infty$. 
\end{remark}
However, again by \eqref{eq:measure-limit}, for any $\eps>0$ there holds
\be\label{eq:strong-outside}
\psi_n\to\psi_\infty\,,\qquad \mbox{strongly in $L^{4}(\R^2\setminus B_\eps,\C^2)$}\,.
\ee
Combining this fact with \eqref{eq:L^p-convergence} we can prove strong $L^2$-convergence.
\begin{proposition}\label{eq:L2strong}
There holds
\be\label{eq:strongL2}
\psi_n\to\psi_\infty\,,\qquad \mbox{strongly in $L^{2}(\R^2,\C^2)$}\,,
\ee
as $n\to\infty$.
\end{proposition}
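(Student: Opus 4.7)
The plan is to upgrade the local strong $L^2$-convergence from \eqref{eq:L^p-convergence} to global strong $L^2$-convergence by controlling the tails $\|\psi_n\|_{L^2(\R^2 \setminus B_R)}$ uniformly in $n$ for large $R$. The obstacle is that $H^{1/2}(\R^2) \hookrightarrow L^2(\R^2)$ is not compact at infinity, so one must extract extra integrability from the Palais--Smale structure.

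First I would apply the bounded operator $A_\omega\colon H^{-1/2} \to H^{1/2}$ of Section~\ref{sec:dual} to the relation $d\cL_\omega(\psi_n) = \cDmo \psi_n - i^*(|\psi_n|^2 \psi_n) \to 0$ in $H^{-1/2}$, and invoke the identity $A_\omega \circ i^* = j \circ B_\omega$ to obtain the decomposition
\[
\psi_n = w_n + \eta_n, \qquad w_n := B_\omega(|\psi_n|^2 \psi_n) \in W^{1,4/3}(\R^2,\C^2), \quad \eta_n \to 0 \text{ in } H^{1/2}.
\]
Since $\||\psi_n|^2 \psi_n\|_{L^{4/3}} = \|\psi_n\|_{L^4}^3$ is uniformly bounded by Lemma~\ref{lem:boundedPS} together with the Sobolev embedding, and $B_\omega\colon L^{4/3} \to W^{1,4/3}$ is continuous, $w_n$ is uniformly bounded in $W^{1,4/3}(\R^2,\C^2)$, and in particular in $L^{4/3}(\R^2,\C^2)$. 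This is the additional integrability at infinity which $\psi_n$ alone does not provide.

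Second, applying the interpolation inequality $\|f\|_{L^2}^2 \leq \|f\|_{L^{4/3}}\|f\|_{L^4}$ (Hölder with $\theta=1/2$) to $w_n$ on $\R^2 \setminus B_R$ gives
\[
\|w_n\|_{L^2(\R^2 \setminus B_R)}^2 \leq C \, \|w_n\|_{L^4(\R^2 \setminus B_R)}.
\]
Writing $w_n = \psi_n - \eta_n$ and combining \eqref{eq:strong-outside} with the Sobolev embedding $H^{1/2} \hookrightarrow L^4$ (which yields $\|\eta_n\|_{L^4} \to 0$), one obtains $\limsup_n \|w_n\|_{L^4(\R^2 \setminus B_R)} = \|\psi_\infty\|_{L^4(\R^2 \setminus B_R)}$, which tends to $0$ as $R \to \infty$ since $\psi_\infty \in L^4$.

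Finally, $\|\eta_n\|_{L^2} \leq C \|\eta_n\|_{H^{1/2}} \to 0$ together with the previous step yields $\limsup_n \|\psi_n\|_{L^2(\R^2 \setminus B_R)} \to 0$ as $R \to \infty$. Since $\psi_\infty \in H^{1/2} \hookrightarrow L^2$ also has vanishing tails, the splitting
\[
\|\psi_n - \psi_\infty\|_{L^2(\R^2)}^2 = \|\psi_n - \psi_\infty\|_{L^2(B_R)}^2 + \|\psi_n - \psi_\infty\|_{L^2(\R^2 \setminus B_R)}^2,
\]
combined with the $L^2_{\loc}$ strong convergence in \eqref{eq:L^p-convergence}, gives \eqref{eq:strongL2}. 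The delicate point is the first step: verifying that the dual formalism genuinely provides the $W^{1,4/3}$-decomposition of $\psi_n$ modulo an $H^{1/2}$-small error, and thus the uniform $L^{4/3}$ bound on $w_n$ that drives the whole interpolation argument.
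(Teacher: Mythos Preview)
Your argument is correct and reaches the same conclusion as the paper, but by a genuinely different route. The paper localises with a smooth cutoff $f$ supported in $\R^2\setminus B_R$, writes
\[
\cDmo(f\psi_n)=-[f,\cDmo]\psi_n+f\vert\psi_n\vert^2\psi_n+o(1)\qquad\mbox{in }H^{-1/2},
\]
shows the right-hand side converges strongly in $L^{4/3}$ (using \eqref{eq:strong-outside} for the nonlinear term and \eqref{eq:L^p-convergence} for the compactly supported commutator $[f,\cDmo]=-\I\sigma\cdot\nabla f$), and then inverts $\cDmo$ to conclude $f\psi_n\to f\psi_\infty$ in $W^{1,4/3}\hookrightarrow L^2$. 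You instead exploit the global decomposition $\psi_n=B_\omega(\vert\psi_n\vert^2\psi_n)+\eta_n$ supplied by the dual formalism of Section~\ref{sec:dual}, extract a uniform $L^{4/3}$ bound on the main piece $w_n$, and interpolate $\Vert w_n\Vert_{L^2}^2\leq\Vert w_n\Vert_{L^{4/3}}\Vert w_n\Vert_{L^4}$ against the $L^4$ tail (controlled via \eqref{eq:strong-outside}) to show the tails of $\psi_n$ are uniformly small in $L^2$. Your approach is arguably cleaner in that it avoids the commutator computation and makes direct use of the dual machinery already set up in Section~\ref{sec:dual}; the paper's cutoff argument is more self-contained and does not rely on the operators $A_\omega,B_\omega$ having been introduced.
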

\begin{proof}
By \eqref{eq:L^p-convergence}, we only need to prove that strong convergence holds in $L^2(\R^2\setminus B_R,\C^2)$, for any $R>0$, exploiting \eqref{eq:strong-outside}.

To this aim, recall that being a Palais-Smale sequence $\psi_n$ verifies
\be\label{eq:PS2}
\cDmo\psi_n=\vert\psi_n\vert^2\psi_n+o(1)\,,\qquad\mbox{in $H^{-\frac{1}{2}}(\R^2,\C^2)$}\,.
\ee
Fix $R>0$ and take a smooth function $f\in C^\infty(\R^2)$ with $\supp f\subseteq \R^2\setminus B_R$, $0\leq f\leq1$ and $f\equiv1$ on $\R^2\setminus B_{2R}$. Observe that 
\[
f\cDmo=\cDmo f+[f,\cDmo]\,,
\]
where the commutator $[f,\cDmo]=-\I\sigma\cdot\nabla f$ is supported on $B_{2R}\setminus B_R$. Then by \eqref{eq:PS2} we get 
\[
\cDmo(f\psi_n)=-[f,\cDmo]\psi_n+f\vert\psi_n\vert^2\psi_n+o(1)\,,\qquad\mbox{in $H^{-\frac{1}{2}}(\R^2,\C^2)$}\,.
\]
Similarly, since $\psi_\infty$ is a weak solution to \eqref{criticaldirac} there holds
\[
\cDmo(f\psi_\infty)=-[f,\cDmo]\psi_\infty+f\vert\psi_\infty\vert^2\psi_\infty\,,\qquad\mbox{in $H^{-\frac{1}{2}}(\R^2,\C^2)$}\,.
\]
Arguing as for \eqref{eq:square}, one gets $f\vert\psi_n\vert^2\psi_n\to f\vert\psi_\infty\vert^2\psi_\infty$ strongly in $L^{\frac{4}{3}}$, as $n\to\infty$. As remarked, the commutator $[f,\cDmo]$ has compact support and so by \eqref{eq:L^p-convergence} we also get $[f,\cDmo]\psi_n\to [f,\cDmo]\psi_\infty$ strongly in $L^{\frac{4}{3}}$, as $n\to\infty$. Then, inverting $\cDmo$ in the above equations we finally get $f\psi_n\to f\psi_\infty$ strongly in $L^{2}$ and the claim follows.
\end{proof}
The result in \eqref{eq:measure-limit} can be rephrased in terms of a profile decomposition (see \cite[Theorem 4]{Palatucci-Pisante}). If $\nu>0$ in \eqref{eq:measure-limit}, then there holds
\be\label{eq:profile-decomp}
\psi_n=\psi_\infty+\sqrt{\lambda_n}\Psi\left(\lambda_n(\cdot-x_n)\right)+r_n\,,
\ee
where $x_n\in\R^2$, $x_n\to0$ and $\lambda_n\to\infty$. Here $\Psi$ is a \emph{bubble} as in \eqref{eq:Sbubbles} and $r_n=o(1)$ in $\mathring{H}^{\frac{1}{2}}(\R^2,\C^2)$. The rescaled profile $\Psi$ is peaking at the origin, preventing strong $L^4$ convergence. The next result shows that it also carries part of the `energy' of the Palais-Smale sequence.
\begin{lemma}\label{lem:energy-split}
Set $\varphi_n:=\psi_n-\psi_\infty$. There holds
\be\label{eq:energy-split}
\cL_\omega(\psi_n)=\cL_\omega(\psi_\infty)+\cL_0(\varphi_n)+o(1)\,,\qquad\mbox{as $n\to\infty$,}
\ee
where $\cL_0(\varphi)=\frac{1}{2}\int_{\R^2}\langle\cD\varphi,\varphi\rangle\,dx-\frac{1}{4}\int_{\R^2}\vert\varphi\vert^4\,dx$, see Section \ref{sec:preliminaries}.
\end{lemma}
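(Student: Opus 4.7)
The strategy is the classical Brezis--Lieb type decomposition: write $\psi_n=\psi_\infty+\varphi_n$, expand $\cL_\omega(\psi_n)$ term by term, and show that all interaction terms are $o(1)$. Concretely, I would split
\[
\cL_\omega(\psi_n)=\tfrac{1}{2}\!\int_{\R^2}\!\langle\cD\psi_n,\psi_n\rangle\,dx+\tfrac{1}{2}\!\int_{\R^2}\!\langle(m\sigma_3-\omega)\psi_n,\psi_n\rangle\,dx-\tfrac{1}{4}\!\int_{\R^2}\!|\psi_n|^4\,dx
\]
and treat the three pieces separately, aiming to isolate $\cL_\omega(\psi_\infty)$ plus the pieces of $\cL_0(\varphi_n)$, with the remaining cross/mass contributions going to zero.

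For the quadratic form associated with the free Dirac operator, expansion yields
\[
\int_{\R^2}\langle\cD\psi_n,\psi_n\rangle\,dx=\int_{\R^2}\langle\cD\psi_\infty,\psi_\infty\rangle\,dx+2\re\!\int_{\R^2}\langle\cD\psi_\infty,\varphi_n\rangle\,dx+\int_{\R^2}\langle\cD\varphi_n,\varphi_n\rangle\,dx.
\]
The cross term vanishes because $\cD\psi_\infty\in H^{-1/2}(\R^2,\C^2)$ and, by \eqref{eq:weakH^1/2}, $\varphi_n\rightharpoonup 0$ weakly in $H^{1/2}(\R^2,\C^2)$, so the duality pairing tends to zero. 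The analogous expansion of $\int\langle(m\sigma_3-\omega)\psi_n,\psi_n\rangle\,dx$ produces the corresponding $\psi_\infty$ term, a cross term that again vanishes (now by weak $L^2$ convergence of $\varphi_n$ tested against $(m\sigma_3-\omega)\psi_\infty\in L^2$), and a pure $\varphi_n$ quadratic form. This last piece is bounded by $(m+|\omega|)\|\varphi_n\|_{L^2}^{2}$, which is $o(1)$ thanks to the strong $L^2$-convergence $\psi_n\to\psi_\infty$ provided by Proposition~\ref{eq:L2strong}. This is the key point that allows the mass/frequency contribution to drop out and leaves only the massless quadratic form $\int\langle\cD\varphi_n,\varphi_n\rangle\,dx$ of $\cL_0(\varphi_n)$.

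For the quartic term, I would invoke the Brezis--Lieb lemma. Up to a subsequence, the strong $L^p_{\mathrm{loc}}$ convergence \eqref{eq:L^p-convergence} yields $\psi_n\to\psi_\infty$ almost everywhere, and $(\psi_n)$ is bounded in $L^4(\R^2,\C^2)$ by Lemma~\ref{lem:boundedPS}. The Brezis--Lieb lemma then gives
\[
\int_{\R^2}|\psi_n|^4\,dx=\int_{\R^2}|\psi_\infty|^4\,dx+\int_{\R^2}|\varphi_n|^4\,dx+o(1).
\]
Assembling the three pieces yields exactly $\cL_\omega(\psi_n)=\cL_\omega(\psi_\infty)+\cL_0(\varphi_n)+o(1)$.

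I do not anticipate a serious obstacle: the proof is an exercise in weak/strong convergence bookkeeping. The only subtle step is the mass/frequency quadratic remainder, which is not automatically negligible from weak $H^{1/2}$ convergence alone and genuinely requires the strong $L^2$ convergence of Proposition~\ref{eq:L2strong} (itself a consequence of the fact that, under the symmetry ansatz, the origin is the only possible concentration point, so that mass is not lost at infinity).
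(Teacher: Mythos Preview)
Your proposal is correct and follows essentially the same approach as the paper: expand $\cL_\omega(\psi_n)$ after writing $\psi_n=\psi_\infty+\varphi_n$, kill the cross terms via weak $H^{1/2}$ convergence, eliminate the mass/frequency remainder on $\varphi_n$ using the strong $L^2$ convergence of Proposition~\ref{eq:L2strong}, and apply the Brezis--Lieb lemma for the quartic term. Your write-up is in fact a bit more explicit than the paper's in justifying each step (e.g., spelling out the a.e.\ convergence needed for Brezis--Lieb), but the argument is the same.
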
 
\begin{proof}
Recalling that $\psi_n=\varphi_n+\psi_\infty$, we have
\be\label{eq:partialsplit}
\begin{split}
\cL_\omega(\psi_n)=\frac{1}{2}\int_{\R^2}\langle\cD(\varphi_n+\psi_\infty),\varphi_n+\psi_\infty\rangle\,dx&+\frac{1}{2}\int_{\R^2}\langle(m\sigma_3-\omega)(\varphi_n+\psi_\infty),\varphi_n+\psi_\infty \rangle\,dx \\
 &+\frac{1}{4}\int_{\R^2}\vert\varphi_n+\psi_\infty\vert^4\,dx\,,
\end{split}
\ee
and then by \eqref{eq:weakH^1/2}, \eqref{eq:L^p-convergence} and \eqref{eq:strongL2} we find
\be
\begin{split}
\cL_\omega(\psi_n)=\int_{\R^2}\langle\cD\varphi_n,\varphi_n\rangle\,+\langle\cD\psi_\infty,\psi_\infty\rangle\,dx&+\frac{1}{2}\int_{\R^2}\langle(m\sigma_3-\omega)\psi_\infty,\psi_\infty\rangle\,dx\\
 &-\frac{1}{4}\int_{\R^2}\vert\varphi_n+\psi_\infty\vert^4\,dx+o(1)\,,
\end{split}
\ee
as $n\to\infty$. Moreover, by the Brezis-Lieb lemma \cite{Brezis-Lieb} there holds
\[
\int_{\R^2}\vert\varphi_n+\psi_\infty\vert^4\,dx=\int_{\R^2}\vert\varphi_n\vert^4\,dx+\int_{\R^2}\vert\psi_\infty\vert^4\,dx+o(1)\,,\qquad\mbox{as $n\to\infty$,}
\]
and combining it with \eqref{eq:partialsplit} we get \eqref{eq:energy-split}.
\end{proof}
We are now in a position to give the following compactness result.
\begin{lemma}\label{lem:compactness}
Let $(\psi_n)_n\in E_S$ be a Palais-Smale sequence for $\cL_{\omega}$ at level $c\geq0$, i.e. $\lim_{n\to\infty}\cL_{\omega}(\psi_n)=c$. If 
\be\label{eq:threshold}
c<\beta(S):=(2S+1)\pi\,,
\ee
then $(\psi_n)_n$ is compact in $H^{\frac{1}{2}}(\R^2,\C^2)$.
\end{lemma}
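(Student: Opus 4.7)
The plan is to show that no concentration can occur below the bubble energy, i.e.\ that $\nu=0$ in \eqref{eq:measure-limit}; once this is done, Remark~\ref{rmk:strongL4} yields strong $L^{4}$-convergence and Proposition~\ref{prop:strong} upgrades it to strong $H^{1/2}$-convergence, which is exactly the claimed compactness.

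First, using Lemma~\ref{lem:boundedPS} I would pass to a subsequence with $\psi_n\rightharpoonup\psi_\infty$ weakly in $H^{1/2}(\R^2,\C^2)$. Since $E_S$ is the fixed-point set of the $\mathbb{S}^1$-action \eqref{eq:S1-action}, it is weakly closed, so $\psi_\infty\in E_S$, and by Proposition~\ref{prop:limitsolution} it is a weak solution of \eqref{criticaldirac}. Testing this equation against $\psi_\infty$ and substituting into \eqref{eq:action} gives the key non-negativity
\[
\cL_\omega(\psi_\infty)=\tfrac{1}{4}\int_{\R^2}|\psi_\infty|^{4}\,dx\ge 0 .
\]

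Next, arguing by contradiction I assume $\nu>0$. Then the profile decomposition \eqref{eq:profile-decomp} yields $\psi_n=\psi_\infty+\Psi_n+r_n$ with $\Psi_n:=\sqrt{\lambda_n}\,\Psi(\lambda_n(\cdot-x_n))$, $\lambda_n\to\infty$, $x_n\to 0$, $r_n=o(1)$ in $\mathring H^{1/2}$ (hence also in $L^{4}$, by the critical Sobolev embedding), and $\Psi$ a non-trivial solution of \eqref{eq:limitequation}. The $E_S$-symmetry of $(\psi_n)_n$, together with the uniqueness of the origin as a fixed point of the action \eqref{eq:S1-action}, forces $\Psi$ itself to be of the symmetric form \eqref{eq:ansatz}: any other concentration geometry would, by rotating, produce a whole orbit of concentration points in \eqref{eq:measure-concentration}, contradicting the previous lemma. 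By the classification recalled in \eqref{eq:Sbubbles}--\eqref{eq:Senergy} this pins $\cL_0(\Psi)=\beta(S)$. Combining the energy-splitting identity of Lemma~\ref{lem:energy-split} with the scale invariance \eqref{eq:invariant} of $\cL_0$ applied to $\Psi_n$ (the cross terms with $r_n$ vanishing because $\|r_n\|_{\mathring H^{1/2}}\to 0$ in the quadratic part and by a Brezis--Lieb argument in the $L^{4}$ part, exactly as in the proof of Lemma~\ref{lem:energy-split}) one obtains
\[
c=\lim_n\cL_\omega(\psi_n)=\cL_\omega(\psi_\infty)+\cL_0(\Psi)\ge\beta(S),
\]
contradicting $c<\beta(S)$. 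Hence $\nu=0$.

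The delicate point I expect to be the main obstacle is the symmetry step forcing $\Psi$ to fulfil the ansatz \eqref{eq:ansatz}: one must exploit both the $E_S$-invariance of the sequence and the rigidity provided by \eqref{eq:S1-action} to rule out drift of the concentration point off the origin and the appearance of non-symmetric solutions of \eqref{eq:limitequation} as bubbles. Once this is in place, the rest is a routine below-the-first-bubble energy comparison, and it is precisely the classification \eqref{eq:Sbubbles} that makes the sharp threshold $\beta(S)=|2S+1|\pi$ explicit and computable.
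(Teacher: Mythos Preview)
Your proposal is correct and follows essentially the same approach as the paper: contradiction via the profile decomposition, the energy-splitting Lemma~\ref{lem:energy-split}, scale invariance of $\cL_0$, and the non-negativity $\cL_\omega(\psi_\infty)\ge 0$ to force $c\ge\beta(S)$. You are in fact somewhat more explicit than the paper about why the bubble $\Psi$ must inherit the symmetric form \eqref{eq:ansatz}, a step the paper simply asserts when stating \eqref{eq:profile-decomp}.
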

\begin{proof}
We argue by contradiction, assuming \eqref{eq:threshold} holds and $\psi_n$ is not compact in $H^{\frac{1}{2}}$. Then $\psi_n\not\to\psi_\infty$, where $\psi_\infty$ is its \emph{weak} limit (see \eqref{eq:weakH^1/2}). Then by Prop. \ref{prop:strong}, $\psi_n\not\to\psi_\infty$ in strong sense in $L^4$, so that $\nu>0$ in \eqref{eq:measure-limit}.

Combining the profile decomposition \eqref{eq:profile-decomp} and \eqref{eq:energy-split} we get 
\[
\cL_\omega(\psi_n)=\cL_\omega(\psi_\infty)+\cL_0(\sqrt{\lambda_n}\Psi(\lambda_n(\cdot-x_n))+r_n)+o(1)\,.
\]
Since $\psi_\infty$ is a weak solution to \eqref{criticaldirac}, there holds $\cL_\omega(\psi_\infty)=\frac{1}{4}\int_{\R^2}\vert\psi_\infty\vert^4\,dx\geq0$. Recall that the two terms in $\cL_0$ are invariant by translations and scaling (see \eqref{eq:invariant}), and then $\cL_0(\sqrt{\lambda_n}\Psi(\lambda_n(\cdot-x_n))=\cL_0(\Psi)$. We thus find
\be
\begin{split}
\cL_0(\sqrt{\lambda_n}\Psi(\lambda_n(\cdot-x_n))+r_n)&=\frac{1}{2}\int_{\R^2}\langle\cD\Psi,\Psi\rangle\,dx+\frac{1}{2}\int_{\R^2}\langle\cD r_n,r_n\rangle\,dx \\
& -\frac{1}{4}\int_{\R^2}\vert\sqrt{\lambda_n}\Psi(\lambda_n(\cdot-x_n))+r_n\vert^4\,dx+o(1)\,,
\end{split}
\ee
using the fact that $r_n=o(1)$ in $H^{\frac{1}{2}}$. Moreover, by the Brezis-Lieb lemma \cite{Brezis-Lieb} we get
\be
\begin{split}
\frac{1}{4}\int_{\R^2}\vert\sqrt{\lambda_n}\Psi(\lambda_n(\cdot-x_n))+r_n\vert^4\,dx&=\frac{1}{4}\int_{\R^2}\vert\Psi\vert^4\,dx+\frac{1}{4}\int_{\R^2}\vert r_n\vert^4\,dx+o(1) \\
&=\frac{1}{4}\int_{\R^2}\vert\Psi\vert^4\,dx+o(1)\,.
\end{split}
\ee
Then, by the above observations and using \eqref{eq:Senergy} we conclude that
\[
\liminf_{n\to\infty}\cL_\omega(\psi_n)\geq\lim_{n\to\infty} [\cL_0(\Psi)+\cL_0(r_n)+o(1)]=\beta(S)\,,
\]
contradicting the assumption \eqref{eq:threshold}.
\end{proof}

\section{Proof of Theorem \ref{thm:main}}\label{sec:proof}
In this section we give the proof of the main Theorem \eqref{thm:main}. For the convenience of the reader we divide the proof into different steps, so that it will be achieved combining Proposition \ref{prop:existence}, \ref{prop:regularity} and \ref{prop:expdecay}.

\subsection{Existence of solutions}\label{sec:existence}
The results of Section \ref{sec:dual} show that finding a critical point of $\mathcal{L}_{\omega}$ is equivalent to the same problem for the dual functional $\mathcal{L}^{*}_{\omega}$. This allows us to exploit the fact that the latter possesses a mountain pass geometry (see e.g. \cite{struwevariational}). 
\begin{lemma}
There exists $\rho>0$ such that 
\be\label{eq:positivesmallball}
\inf\{\mathcal{L}^{*}_{\omega}(\varphi)\;:\;\varphi\in L^{4/3}(\R^2,\C^2),\;\Vert\varphi\Vert_{L^{4/3}}=\rho\}>0\,.
\ee
Moreover, given $\varphi\in L^{4/3}(\R^2,\C^2)$ such that $\int_{\R^2}\langle \varphi,A_{\omega}\varphi\rangle>0$, there holds 
\be\label{eq:negativealongray}
\lim_{t\rightarrow+\infty}\mathcal{L}^{*}_{\omega}(t\varphi)=-\infty\,.
\ee
\end{lemma}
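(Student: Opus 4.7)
The plan is to exploit the mismatch between the homogeneity degrees of the two terms in
\[
\mathcal{L}^{*}_{\omega}(\varphi) = \frac{3}{4}\int_{\R^{2}}|\varphi|^{4/3}\,dx - \frac{1}{2}\int_{\R^{2}}\langle K_{\omega}\varphi,\varphi\rangle\,dx,
\]
namely $4/3$ versus $2$: the first term dominates near the origin and the second term dominates along any ray in which $\langle K_{\omega}\varphi,\varphi\rangle_{L^{4}\times L^{4/3}}>0$.

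For the positivity on a small sphere \eqref{eq:positivesmallball}, the preliminary step is to observe that $K_{\omega}=i\circ A_{\omega}\circ i^{*}$ is a bounded operator from $L^{4/3}(\R^{2},\C^{2})$ to $L^{4}(\R^{2},\C^{2})$, since it is the composition of the bounded maps $i^{*}\colon L^{4/3}\to H^{-1/2}$, $A_{\omega}\colon H^{-1/2}\to H^{1/2}$, and the Sobolev embedding $i\colon H^{1/2}\hookrightarrow L^{4}$ recalled in Section \ref{sec:dual}. Hence by the Hölder pairing there exists $C>0$ such that
\[
\Bigl|\int_{\R^{2}}\langle K_{\omega}\varphi,\varphi\rangle\,dx\Bigr|\leq \|K_{\omega}\varphi\|_{L^{4}}\,\|\varphi\|_{L^{4/3}} \leq C\,\|\varphi\|_{L^{4/3}}^{2}
\]
for every $\varphi\in L^{4/3}(\R^{2},\C^{2})$. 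Therefore
\[
\mathcal{L}^{*}_{\omega}(\varphi)\geq \frac{3}{4}\|\varphi\|_{L^{4/3}}^{4/3}-\frac{C}{2}\|\varphi\|_{L^{4/3}}^{2}= \|\varphi\|_{L^{4/3}}^{4/3}\Bigl(\frac{3}{4}-\frac{C}{2}\|\varphi\|_{L^{4/3}}^{2/3}\Bigr),
\]
and picking $\rho>0$ sufficiently small, e.g.\ $\rho<(3/(2C))^{3/2}$, the right-hand side is bounded below by a positive constant uniformly on $\{\|\varphi\|_{L^{4/3}}=\rho\}$, proving \eqref{eq:positivesmallball}.

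For the second assertion \eqref{eq:negativealongray}, the key point is that along the ray $t\varphi$ the functional depends on $t$ only through two explicit powers:
\[
\mathcal{L}^{*}_{\omega}(t\varphi)=\frac{3}{4}\,t^{4/3}\int_{\R^{2}}|\varphi|^{4/3}\,dx-\frac{t^{2}}{2}\int_{\R^{2}}\langle K_{\omega}\varphi,\varphi\rangle\,dx,
\]
using that the integrand $|t\varphi|^{4/3}=t^{4/3}|\varphi|^{4/3}$ and that $K_{\omega}$ is linear. Interpreting the hypothesis $\int \langle \varphi,A_{\omega}\varphi\rangle>0$ as $\int \langle K_{\omega}\varphi,\varphi\rangle_{L^{4}\times L^{4/3}}>0$ (consistent with the factorisation $K_{\omega}=i\circ A_{\omega}\circ i^{*}$), and observing that $4/3<2$, we conclude that as $t\to+\infty$ the negative quadratic term dominates the sublinear positive contribution, so $\mathcal{L}^{*}_{\omega}(t\varphi)\to-\infty$.

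Neither step presents a genuine obstacle: both reduce to elementary one-variable asymptotics once the continuity of $K_{\omega}\colon L^{4/3}\to L^{4}$ is recorded. The only point requiring a moment's care is the interpretation of the duality pairing in the statement, which is settled by the identification $\langle\varphi,A_{\omega}\varphi\rangle=\langle\varphi,K_{\omega}\varphi\rangle_{L^{4/3}\times L^{4}}$ coming from the commutative diagram of Section \ref{sec:dual}.
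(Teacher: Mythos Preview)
Your proof is correct and follows essentially the same approach as the paper: bounding the quadratic term by $C\|\varphi\|_{L^{4/3}}^{2}$ via the boundedness of $K_{\omega}\colon L^{4/3}\to L^{4}$, and then comparing the homogeneities $4/3$ versus $2$ for both conclusions. The only cosmetic difference is that the paper factors $K_{\omega}$ through $B_{\omega}\colon L^{4/3}\to W^{1,4/3}$ and the Sobolev embedding $W^{1,4/3}\hookrightarrow L^{4}$, whereas you use the $H^{\pm 1/2}$ route; both yield the same estimate.
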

\begin{proof}
Recall that $\cL^*_\omega(\varphi)=\frac{3}{4}\int_{\R^2}\vert\varphi\vert^{\frac{4}{3}}\,dx-\frac{1}{2}\int_{\R^2}\langle\varphi,A_\omega\varphi\rangle\,dx$ so that by the Sobolev embedding
\[
\begin{split}
\left\vert\int_{\R^2}\langle\varphi,A_\omega\varphi\rangle\,dx\right\vert&\leq\Vert\varphi\Vert_{L^{\frac{4}{3}}}\Vert A_\omega\varphi\Vert_{L^4}\\
&\leq\Vert\varphi\Vert_{L^{\frac{4}{3}}}\Vert A_\omega\varphi\Vert_{W^{1,4/3}}\leq\Vert\varphi\Vert^2_{L^{\frac{4}{3}}}
\end{split}
\]
so that \eqref{eq:positivesmallball} follows for $\rho:=\Vert\varphi\Vert_{L^{4}{3}}$ small. Moreover, \eqref{eq:negativealongray} easily follows since for a fixed $\varphi\in L^{4/3}(\R^2,\C^2)$ with $\int_{\R^2}\langle \varphi,A_{\omega}\varphi\rangle>0$, there holds 
\[
\cL^*_\omega(t\varphi)=\frac{3}{4}t^{\frac{4}{3}}\int_{\R^2}\vert\varphi\vert^{\frac{4}{3}}\,dx-\frac{1}{2}t^2\int_{\R^2}\langle\varphi,A_\omega\varphi\rangle\,dx\,,\qquad t\geq0\,.
\]
\end{proof}

The mountain pass level for $\mathcal{L}^{*}_{\omega}$ is defined as 
\be\label{eq:mplevel}
c_{\omega}:=\inf \bigg\{\max_{t\geq0}\mathcal{L}^{*}_{\omega}(t\varphi)\;:\;\varphi\in L^{4/3}(\R^2,\C^2), \int_{\R^2}\langle\varphi, A_{\omega}\varphi\rangle\,dx>0\;\bigg\}. 
\ee
It can be easily shown that
$$c_{\omega}:=\inf\bigg\{\frac{1}{4}\frac{(\int_{\R^2}\vert\varphi\vert^{4/3}\,dx)^{3}}{(\int_{\R^2}\Re\langle\varphi,A_{\omega}\varphi\rangle\,dx)^{2}}\;:\;\varphi\in L^{4/2}(\R^2,\C^2), \int_{\R^2}\langle\varphi, A_{\omega}\varphi\rangle\,dx>0\bigg\}.  $$

Set
\be\label{eq:quotient}
J(\phi):=\frac{1}{4}\frac{(\int_{\R^2}\vert\varphi\vert^{4/3}\,dx)^{3}}{(\int_{\R^2}\Re\langle\varphi,A_{\omega}\varphi\rangle\,dx)^{2}}\,.
\ee
According to the compactness analysis described in Section \ref{sec:compactness}, we need to find a suitable test spinor $\widetilde{\varphi}\in L^{4/3}(\R^2,\C^2)$ such that 
\[
J(\widetilde{\varphi})<\beta\,,
\]
where $\beta$ is the lower bound for the energy of the bubbles. This allows to recover compactness of Palais-Smale sequences and to get the existence of a critical point of $\cL_\omega$.

\medskip

In what follows we fix $S\in\Z\setminus\{0\}$ and consider spinors of the form \eqref{eq:ansatz}, accordingly. We assume $S>0$, as the case $S<0$ follows by the same arguments.
 In this case the bubbles are given by \eqref{eq:Sbubbles}, so that the threshold energy becomes $\beta(S)=(2S+1)\pi$, see \eqref{eq:Senergy}.

\medskip

\begin{lemma}\label{lem:inequality}
There exists a spinor $\widetilde{\varphi}\in L^{4/3}(\R^2,\C^4)$ of the form \eqref{eq:ansatz} such that
\be\label{eq:inequality}
J(\widetilde{\varphi})<\beta(S)=(2S+1)\pi
\ee
\end{lemma}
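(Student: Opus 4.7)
The plan is to exhibit as test spinor the dual of the rescaled massless bubble and show that the first-order correction to the dual energy $J$ coming from the mass/frequency perturbation pushes $J$ strictly below $\beta(S)$. Let $\Psi$ be the bubble from \eqref{eq:Sbubbles} and set $\Psi_\lambda(x):=\sqrt\lambda\,\Psi(\lambda x)$ for $\lambda>0$, so that $\cD\Psi_\lambda=|\Psi_\lambda|^2\Psi_\lambda$. Since multiplication by the radial function $|\Psi_\lambda|^2$ preserves the ansatz \eqref{eq:ansatz}, the candidate
\[
\widetilde\varphi_\lambda:=|\Psi_\lambda|^2\Psi_\lambda=\cD\Psi_\lambda
\]
has the required symmetric form, and the identity $\int|\widetilde\varphi_\lambda|^{4/3}\,dx=\int|\Psi_\lambda|^4\,dx=\int|\Psi|^4\,dx=4\beta(S)$ (scale invariance of the $L^4$-norm) shows $\widetilde\varphi_\lambda\in L^{4/3}(\R^2,\C^2)$.

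To evaluate $\int\langle\widetilde\varphi_\lambda,A_\omega\widetilde\varphi_\lambda\rangle$ I use the operator identity $A_\omega\cD=I-A_\omega(m\sigma_3-\omega)$ applied to $\Psi_\lambda$, which yields $A_\omega\widetilde\varphi_\lambda=\Psi_\lambda-A_\omega h_\lambda$ with $h_\lambda:=(m\sigma_3-\omega)\Psi_\lambda$. Pairing with $\widetilde\varphi_\lambda=\cD\Psi_\lambda$ and invoking self-adjointness of $A_\omega$ gives
\[
\int\langle\widetilde\varphi_\lambda,A_\omega\widetilde\varphi_\lambda\rangle\,dx=\int|\Psi|^4\,dx-\int\langle\Psi_\lambda,(m\sigma_3-\omega)\Psi_\lambda\rangle\,dx+\int\langle h_\lambda,A_\omega h_\lambda\rangle\,dx.
\]
The scaling $\|\Psi_\lambda\|_{L^2}^2=\lambda^{-1}\|\Psi\|_{L^2}^2$ turns the middle term into $\lambda^{-1}\int\langle\Psi,(m\sigma_3-\omega)\Psi\rangle\,dx$, while the Fourier change of variable $p=\lambda q$ applied to the explicit symbol $\widehat{A_\omega}(p)=(\sigma\cdot p+m\sigma_3+\omega)/(|p|^2+m^2-\omega^2)$ shows that the last term is $O(\lambda^{-2})$ as $\lambda\to\infty$. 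Inserting into the defining formula for $J$ and expanding for large $\lambda$,
\[
J(\widetilde\varphi_\lambda)=\beta(S)+\frac{1}{2\lambda}\int\langle\Psi,(m\sigma_3-\omega)\Psi\rangle\,dx+o(\lambda^{-1}).
\]

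Finally, the sign of the first-order correction equals that of $m\int(v^2-u^2)\,dx-\omega\int|\Psi|^2\,dx$. Reducing the radial integrals of the explicit profile \eqref{eq:Sbubbles} through the substitution $r=e^s$ to the elementary family $\int_{-\infty}^\infty e^{c\tau}/\cosh^2\tau\,d\tau=\pi c/\sin(\pi c/2)$ gives the closed form $\int(v^2-u^2)\,dx=-\tfrac{1}{2S+1}\int|\Psi|^2\,dx$, so the first-order coefficient simplifies to $-\bigl(m/(2S+1)+\omega\bigr)\int|\Psi|^2\,dx$; since $\int|\Psi|^2\,dx>0$, the coefficient is strictly negative in the appropriate regime of $\omega$ and taking $\lambda$ large enough yields $J(\widetilde\varphi_\lambda)<\beta(S)$. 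The main technical step is the sharp remainder bound $\int\langle h_\lambda,A_\omega h_\lambda\rangle=O(\lambda^{-2})$, which rests on the Fourier-side cancellation $(\sigma\cdot p)\widehat{A_\omega}(p)(\sigma\cdot p)=|p|^2(\sigma\cdot p-m\sigma_3+\omega)/(|p|^2+m^2-\omega^2)$ together with sufficient decay of $\widehat\Psi$; the complementary range of $\omega$, where the leading coefficient changes sign, is handled by replacing $\widetilde\varphi_\lambda$ by the analogous construction based on the swapped profiles (still of the form \eqref{eq:ansatz}) so as to flip the sign of the first-order correction.
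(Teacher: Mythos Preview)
Your strategy coincides with the paper's: take $\cD$ of the rescaled bubble as test spinor and expand $J$ to first order in the small parameter. The paper works with a truncated bubble $\psi_\eps=\theta\,\Psi(\cdot/\eps)$ and $\varphi_\eps=\cD\psi_\eps$, then estimates numerator and denominator through several cutoff error terms; you drop the cutoff and use $\widetilde\varphi_\lambda=\cD\Psi_\lambda$ directly, which is cleaner and legitimate here because for $S\neq0$ the bubble $\Psi$ belongs to $L^{4/3}\cap L^2$, so all the pairings you write are finite without truncation. Your explicit computation of the first-order coefficient via $r=e^s$ is also more informative than the paper's treatment, which leaves it as an unspecified constant $M$.

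Two points need sharpening. First, ``swapped profiles'' is too vague; the precise mechanism (and the one the paper uses) is to replace $\Psi$ by $\sigma_3\Psi$. Since $\cD\sigma_3=-\sigma_3\cD$ one has $\cD(\sigma_3\Psi)=-|\Psi|^2\sigma_3\Psi$, so the leading term of $\int\langle\widetilde\varphi,A_\omega\widetilde\varphi\rangle$ changes sign, while $\langle\sigma_3\Psi,(m\sigma_3-\omega)\sigma_3\Psi\rangle=\langle\Psi,(m\sigma_3-\omega)\Psi\rangle$ is unchanged; after squaring the denominator this reverses the sign of the $\lambda^{-1}$ correction in $J$, and $\sigma_3\Psi$ is still of the form \eqref{eq:ansatz}. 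Second, your own formula shows the first-order coefficient vanishes precisely at $\omega=-m/(2S+1)\in(-m,m)$; at this value neither your construction nor the $\sigma_3$-variant decides the strict inequality at order $\lambda^{-1}$, and one must analyse the sign of the $O(\lambda^{-2})$ remainder $\int\langle h_\lambda,A_\omega h_\lambda\rangle$ (the paper's argument has the same lacuna, handling only the cases $M>0$ and $M<0$).
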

\begin{proof}
Consider the bubble $\Psi$ in \eqref{eq:Sbubbles}. Given $\eps>0$ define
\be\label{eq:psieps}
\psi_\eps(x):=\theta(x)\Psi(x/\eps)\,,\qquad x\in\R^2\,,
\ee
where $\theta\in C^\infty_c(\R^2)$, $0\leq\theta\leq 1$, is a cutoff function supported in $B_2(0)$, with $\eta\equiv1$ on $B_1(0)$.  
Define
\be\label{eq:testspinor}
\varphi_\eps(x):=\cD\psi_\eps(x)\,,\qquad x\in\R^2\,.
\ee
Our aim is to show that we can choose $\widetilde{\varphi}=\varphi_\eps$, for suitable $0<\eps\ll1$.

\emph{Step 1: estimate of the numerator.}  Recalling that $\cD\Psi=\vert\Psi\vert^2\Psi$, we have
\be\label{eq:phisquared}
\begin{split}
\vert\varphi_\eps\vert^2&=\vert\eps^{-1}\theta\cD\Psi(x/\eps)-\I(\sigma\cdot\nabla\theta)\Psi(x/\eps)\vert^2 \\
&= \eps^{-2}\theta^2\vert\Psi(x/\eps)\vert^6+\vert\nabla\theta\vert^2\vert\Psi(x/\eps)\vert^2 \\
&+2\eps^{-1}\theta\vert\Psi(x/\eps)\vert^2\underbrace{\Re\langle\Psi(x/\eps),(-\I\sigma\cdot\nabla\theta)\Psi(x/\eps) \rangle}_{=0}\,,
\end{split}
\ee
where the last term vanishes as the matrix $-\I\sigma\cdot\nabla\theta$ is skew-hermitian. 

On $B_2(0)$ we have
\[
\begin{split}
\vert\varphi_\eps\vert^2 &\leq \eps^{-2}\vert\Psi(x/\eps)\vert^6+\vert\nabla\theta\vert^2\vert\Psi(x/\eps)\vert^2 \\
&\leq\eps^{-2}\vert\Psi(x/\eps)\vert^6\left(1+\eps^2\vert\nabla\theta\vert^2\vert\Psi(x/\eps)\vert^{-4} \right)\,.
\end{split}
\]
The elementay inequality $(1+t)^{\frac{2}{3}}\leq1+t^{\frac{2}{3}}$, $t\geq0$, gives on $B_2(0)$
\[
\begin{split}
\vert\varphi_\eps\vert^{\frac{4}{3}}&\leq\eps^{-\frac{4}{3}}\vert\Psi(x/\eps)\vert^4\left(1+\eps^{\frac{4}{3}}\vert\nabla\theta\vert^2\vert\Psi(x/\eps)\vert^{-\frac{8}{3}} \right)\\
&=\eps^{-\frac{4}{3}}\vert\Psi(x/\eps)\vert^4+\vert\nabla\theta\vert^2\vert\Psi(x/\eps)\vert^{\frac{4}{3}}
\end{split}
\]

Observing that $\varphi_\eps$ is supported on $B_2(0)$ and $\supp \nabla\theta\subseteq B_2(0)\setminus B_1(0)$ we find
\[
\begin{split}
\int_{\R^2}\vert\varphi_\eps\vert^{\frac{4}{3}} \,dx&\leq\eps^{-\frac{4}{3}}\int_{B_2(0)}\vert\Psi(x/\eps)\vert^4 \,dx+C\int_{B_2\setminus B_1}\vert\Psi(x/\eps)\vert^{\frac{4}{3}} \\
&=\eps^{\frac{2}{3}}\int_{B_\frac{2}{\eps}}\vert\Psi\vert^4\,dx+C\eps^{2}\underbrace{\int_{B_{\frac{2}{\eps}}\setminus B_{\frac{1}{\eps}}}\vert\Psi\vert^{\frac{4}{3}}\,dx}_{=o_\eps(1)}\,,
\end{split}
\]
as $\Psi\in L^{\frac{4}{3}}$ for $S\neq0$, see \eqref{eq:Sbubbles}. Then there holds
\[
\int_{\R^2}\vert\varphi_\eps\vert^{\frac{4}{3}} \,dx\leq\eps^{\frac{2}{3}}\int_{B_\frac{2}{\eps}}\vert\Psi\vert^4\,dx+o_\eps(1)=4(2S+1)\pi \eps^{\frac{2}{3}}+o(\eps^{2})\,.
\]
by \eqref{eq:Senergy}, so that
\be\label{eq:num-bound}
\left( \int_{\R^2}\vert\varphi_\eps\vert^{\frac{4}{3}} \,dx\right)^3\leq4^3(2S+1)^3\pi^3 \eps^{2}+o(\eps^{\frac{10}{3}})\,.
\ee
\emph{Step 1: estimate of the denominator.} 
Recall that $A_\omega=(\cD+m\sigma_3-\omega)^{-1}$ and let $\eta_\eps$ be defined setting
\be\label{eq:def-eta}
A_\omega\varphi_\eps=\psi_\eps+\eta_\eps\,,
\ee
so that 
\be\label{eq:eta}
\eta_\eps=A_\omega(\omega-m\sigma_3)\psi_\eps\,.
\ee
There holds
\be\label{eq:denominator}
\int_{\R^2}\Re\langle\varphi_\eps,A_\omega\varphi_\eps\rangle\,dx=\int_{\R^2}\Re\langle\varphi_\eps,\psi_\eps\rangle\,dx+\int_{\R^2}\Re\langle\varphi_\eps,\eta_\eps\rangle\,dx\,.
\ee
By \eqref{eq:testspinor}, we have
\[
\begin{split}
\int_{\R^2}\Re\langle\varphi_\eps,\psi_\eps\rangle\,dx&=\int_{\R^2}\theta^2\Re\langle\eps^{-1}\cD\Psi(x/\eps),\Psi(x/\eps)\rangle\,dx\\
&+\int_{\R^2}\theta\underbrace{\Re\langle-\I(\sigma\cdot\nabla\theta)\Psi(x/\eps),\Psi(x/\eps)\rangle}_{=0}\,dx\,,
\end{split}
\]
the matrix $-\I(\sigma\cdot\nabla\theta)$ being skew-hermitian. Then we find, by the definition of $\theta$,
\[
\int_{\R^2}\Re\langle\varphi_\eps,\psi_\eps\rangle\,dx=\int_{\R^2}\eps^{-1}\theta^2\vert\Psi(x/\eps)\vert^4\,dx\geq\int_{B_1}\eps^{-1}\vert\Psi(x/\eps)\vert^4\,dx\,.
\]
Observe that
\[
\eps^{-1}\int_{B_1}\vert\Psi(x/\eps)\vert^4\,dx=\eps\int_{B_{\frac{1}{\eps}}}\vert\Psi\vert^4\,dx=\eps\int_{\R^2}\vert\Psi\vert^4\,dx - \eps\int_{\R^2\setminus B_{\frac{1}{\eps}}}\vert\Psi\vert^4\,dx\,
\]
By \eqref{eq:Sbubbles} we deduce that $\vert\Psi\vert^4\sim r^{-4(S+1)}$ as $r\to\infty$, so that passing to polar coordinates we find
\[
\int_{\R^2\setminus B_{\frac{1}{\eps}}}\vert\Psi\vert^4\,dx\lesssim \int^\infty_{\frac{1}{\eps}}r^{-4S-3} \,dr=\mathcal{O}(\eps^{4S+2})\,,
\]
and then we get
\be\label{eq:first-term}
\int_{\R^2}\Re\langle\varphi_\eps,\psi_\eps\rangle\,dx\geq\eps\int_{\R^2}\vert\Psi(x/\eps)\vert^4\,dx+\mathcal{O}(\eps^{4S+3})=4(2S+1)\eps\pi+\mathcal{O}(\eps^{4S+3})\,.
\ee
\smallskip

We now turn to the second term on the right-hand side of \eqref{eq:denominator}. By \eqref{eq:eta} and \eqref{eq:def-eta} we find
\[
\begin{split}
\int_{\R^2}\Re\langle\varphi_\eps,\eta_\eps\rangle\,dx&=\int_{\R^2}\Re\langle A_\omega\varphi_\eps,(\omega-m\sigma_3)\psi_\eps\rangle\,dx\\
&=\int_{\R^2}\langle\psi_\eps,(\omega-m\sigma_3)\psi_\eps\rangle\rangle\,dx+\int_{\R^2}\langle\eta_\eps,(\omega-m\sigma_3)\psi_\eps\rangle\,dx\,.
\end{split}
\]
Observe that $\supp\psi_\eps\subseteq B_2$, so that elliptic estimates for the Dirac operator (see \eqref{eq:eta}) and the Sobolev embedding give
\[
\begin{split}
\left\vert\int_{\R^2}\langle\eta_\eps,(\omega-m\sigma_3)\psi_\eps\rangle\,dx\right\vert&=\left\vert\int_{B_2}\langle\eta_\eps,(\omega-m\sigma_3)\psi_\eps\rangle\,dx\right\vert\leq C \Vert\eta_\eps\Vert_{L^{\frac{4}{3}}(B_2)}\Vert\psi_\eps\Vert_{L^{4}(B_2)}\\
&\leq C\Vert\eta_\eps\Vert_{W^{1,\frac{5}{4}}(B_2)}\Vert\psi_\eps\Vert_{L^{4}(B_2)}\leq C\Vert\psi_\eps\Vert_{L^{4}(B_2)}\Vert\psi_\eps\Vert_{L^{\frac{5}{4}}(B_2)}\,.
\end{split}
\]
Since $\psi_\eps(x):=\theta(x)\Psi(x/\eps)$
\[
\Vert\psi_\eps\Vert_{L^{4}(B_2)}\Vert\psi_\eps\Vert_{L^{\frac{5}{4}}(B_2)}\leq\eps^{\frac{21}{10}}\Vert\Psi\Vert_{L^4}\Vert\Psi\Vert_{L^\frac{5}{4}}\,,
\]
so that 
\[
\left\vert\int_{\R^2}\langle\eta_\eps,(\omega-m\sigma_3)\psi_\eps\rangle\,dx\right\vert=o(\eps^2)\,.
\]
Moreover, there holds
\[
\begin{split}
\int_{\R^2}\langle\psi_\eps,(\omega-m\sigma_3)\psi_\eps\rangle\rangle\,dx&=\eps^2\int_{\R^2}\langle\Psi,(\omega-m\sigma_3)\Psi\rangle\rangle\,dx-\mathcal{O}\left(\eps^2\int_{\R^2\setminus B_{\frac{1}{\eps}}}\vert\Psi\vert^2\,dx\right)\\
&=\eps^2\int_{\R^2}\langle\Psi,(\omega-m\sigma_3)\Psi\rangle\rangle\,dx+o(\eps^2)\,,
\end{split}
\]
as $\Psi\in L^2$ (see \eqref{eq:Sbubbles}). Combining the above observation and \eqref{eq:first-term} we get
\[
\int_{\R^2}\Re\langle\varphi_\eps,A_\omega\varphi_\eps\rangle\,dx\geq4\eps(2S+1)\pi+\eps^2\int_{\R^2}\langle\Psi,(\omega-m\sigma_3)\Psi\rangle\rangle\,dx+o(\eps^3)\,,
\]
and then
\be\label{eq:denom-bound}
\left(\int_{\R^2}\Re\langle\varphi_\eps,A_\omega\varphi_\eps\rangle\,dx\right)^2\geq4^2\eps^2(2S+1)^2\pi^2+\eps^3 2^5(2S+1)^2\pi^2\int_{\R^2}\langle\Psi,(\omega-m\sigma_3)\Psi\rangle\rangle\,dx+o(\eps^3)
\ee
Assume
\be\label{eq:assumption}
M:=\int_{\R^2}\langle\Psi,(\omega-m\sigma_3)\Psi\rangle\rangle\,dx>0\,.
\ee
Then, by \eqref{eq:num-bound} and \eqref{eq:denom-bound} we find
\[
\begin{split}
J(\varphi_\eps) &\leq \frac{1}{4}\frac{4^3(2S+1)^3\pi^3 \eps^{2}+o(\eps^{\frac{10}{3}})}{4^2(2S+1)^2\pi^2 \eps^{2}+2^5(2S+1)^2\pi^2M\eps^3+o(\eps^3)}=(2S+1)\pi\frac{1+o(\eps^{\frac{4}{3}})}{1+2\eps M+o(\eps)}\\
& < (2S+1)\pi\,,
\end{split}
\]
for $\eps>0$ small, thus proving \eqref{eq:inequality}.
\medskip

Suppose now $M<0$ (see \eqref{eq:assumption}). In this case we modify the test spinor \eqref{eq:testspinor} and set
\[
\varphi_\eps:=\theta(x)\sigma_3\Psi(x/\eps)\,,
\]
where $\sigma_3$ is the third Pauli matrix, see \eqref{eq:pauli}. Observe that $\sigma_3$ is hermitian, unitary and anti-commutes with $\cD$, that is
\[
\cD\sigma_3=-\sigma_3\cD\,,
\]
so that $\cD(\sigma_3\Psi)=-\vert\Psi\vert^2\sigma_3\Psi$, where $\Psi$ is one of the bubbles in \eqref{eq:Sbubbles}. It is not hard to see that \eqref{eq:num-bound} still holds. Concerning the denominator in \eqref{eq:quotient}, observe that
\[
\begin{split}
\int_{\R^2}\Re\langle\varphi_\eps,\psi_\eps\rangle\,dx= -\int_{\R^2}\eps^{-1}\theta^2\vert\Psi(x/\eps)\vert^4\,dx&=-\eps^2\int_{\R^2}\vert\Psi\vert^4\,dx+\mathcal{O}\left(\eps^2\int_{\R^2\setminus B_{\frac{1}{\eps}}} \vert\Psi\vert^4\,dx\right) \\
&=-\eps^2\int_{\R^2}\vert\Psi\vert^4\,dx+o(\eps^2)\,,
\end{split}
\]
as $\Psi\in L^2$. Moreover, we still have
\[
\int_{\R^2}\langle\psi_\eps,(\omega-m\sigma_3)\psi_\eps\rangle\rangle\,dx=\eps^2\underbrace{\int_{\R^2}\langle\Psi,(\omega-m\sigma_3)\Psi\rangle\rangle\,dx}_{=M<0}+o(\eps^2)\,,
\]
and 
\[
\left(\int_{\R^2}\Re\langle\varphi_\eps,A_\omega\varphi_\eps\rangle\,dx\right)^2=4^2\eps^2(2S+1)^2\pi^2-2^5(2S+1)^2\pi^2M\eps^3+o(\eps^3)\,,
\]
so that, similarly to the previous case, we get
\[
J(\varphi_\eps) \leq (2S+1)\pi\frac{1+o(\eps^{\frac{4}{3}})}{1-2\eps M+o(\eps)}< (2S+1)\pi\,,
\]
for $\eps>0$ small, as now $M<0$.
\end{proof}
\begin{proposition}\label{prop:existence}
Let $S\in\Z$, $S\neq0$. Then the functional $\cL_\omega$ has a non-trivial critical point in the subspace $E_S\subseteq H^{1/2}(\R^2,\C^2)$ of spinors of the form \eqref{eq:ansatz}. Such spinor is a weak solution to \eqref{criticaldirac}.
\end{proposition}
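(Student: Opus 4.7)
The plan is to apply the mountain pass theorem to the dual functional $\mathcal{L}^*_\omega$ restricted to the symmetric subspace $F_S \subseteq L^{4/3}(\mathbb{R}^2,\mathbb{C}^2)$ consisting of spinors whose image under $K_\omega$ lies in $E_S$ (equivalently, spinors of the same $\mathbb{S}^1$-equivariant form as in \eqref{eq:ansatz}, since $\mathcal{D}_{m,\omega}$ commutes with the action $\mathcal{R}_\theta$ in \eqref{eq:S1-action}). Because the operator and the nonlinearity are both equivariant under this $\mathbb{S}^1$-action, the principle of symmetric criticality of Palais guarantees that a critical point of $\mathcal{L}^*_\omega|_{F_S}$ is in fact a critical point of $\mathcal{L}^*_\omega$, and then by Lemma \ref{lem:correspondence} corresponds to a critical point of $\mathcal{L}_\omega$ lying in $E_S$.

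The mountain pass geometry of $\mathcal{L}^*_\omega$ has been checked in the previous lemma: $\mathcal{L}^*_\omega$ is positive on a small sphere and diverges to $-\infty$ along any ray with $\int \langle \varphi, A_\omega \varphi\rangle > 0$. The test spinor $\widetilde{\varphi}$ constructed in Lemma \ref{lem:inequality} belongs to $F_S$ (it is built from a bubble $\Psi$ of the form \eqref{eq:Sbubbles}) and satisfies $\int \langle \widetilde\varphi, A_\omega \widetilde\varphi\rangle > 0$ for $\varepsilon$ small, so the same geometry is inherited by $\mathcal{L}^*_\omega|_{F_S}$, and the corresponding mountain pass level
\[
c_\omega^S := \inf_{\gamma \in \Gamma_S} \max_{t \in [0,1]} \mathcal{L}^*_\omega(\gamma(t))
\]
is finite and satisfies $0 < c_\omega^S \leq J(\widetilde\varphi) < \beta(S) = (2S+1)\pi$, the last strict inequality being exactly the content of Lemma \ref{lem:inequality}.

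Next I would invoke the mountain pass theorem to produce a Palais-Smale sequence $(\varphi_n)_n \subset F_S$ for $\mathcal{L}^*_\omega$ at level $c_\omega^S$. By the one-to-one correspondence of Palais-Smale sequences between $\mathcal{L}^*_\omega$ and $\mathcal{L}_\omega$ recalled at the end of Section \ref{sec:dual}, the spinors $\psi_n := A_\omega \circ i^*(\varphi_n)$ form a Palais-Smale sequence for $\mathcal{L}_\omega$ in $E_S$, again at level $c_\omega^S$. The strict inequality $c_\omega^S < \beta(S)$ places this sequence below the compactness threshold identified in Lemma \ref{lem:compactness}, which therefore yields (along a subsequence) strong convergence $\psi_n \to \psi_\infty$ in $H^{1/2}(\mathbb{R}^2,\mathbb{C}^2)$. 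By Proposition \ref{prop:limitsolution}, the limit $\psi_\infty$ is a weak solution of \eqref{criticaldirac}; by construction $\psi_\infty \in E_S$ since $E_S$ is closed under weak (and hence strong) $H^{1/2}$-convergence. Non-triviality follows because $\mathcal{L}_\omega(\psi_\infty) = c_\omega^S > 0$, so $\psi_\infty \neq 0$.

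The step I expect to be most delicate is ensuring that everything can genuinely be carried out in the symmetric subspace $F_S$: one must check that the mountain pass paths can be chosen in $F_S$ (straightforward since $F_S$ is a closed subspace containing $\widetilde\varphi$), that symmetric criticality applies (which requires the invariance of $\mathcal{L}^*_\omega$ under the unitary $\mathbb{S}^1$-action induced by $\mathcal{R}_\theta$, easily verified since $\mathcal{D}_{m,\omega}$ commutes with $\mathcal{R}_\theta$ and the nonlinearity depends only on $|\psi|$), and that the compactness lemma indeed applies to PS sequences in $E_S$ (exactly the setting of Section \ref{sec:compactness}, where the symmetry was used precisely to force the origin as the only possible concentration point). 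Once these symmetry bookkeeping issues are settled, the existence follows by combining the strict energy estimate of Lemma \ref{lem:inequality} with the compactness threshold of Lemma \ref{lem:compactness}.
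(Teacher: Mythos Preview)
Your proposal is correct and follows essentially the same strategy as the paper: use the mountain pass geometry of the dual functional, the strict inequality from Lemma~\ref{lem:inequality} to place the minimax level below $\beta(S)$, and Lemma~\ref{lem:compactness} to recover compactness and hence a nontrivial critical point, then transfer via Lemma~\ref{lem:correspondence}. The paper's proof is considerably more terse---it simply invokes ``a standard deformation argument'' without spelling out the Palais-Smale transfer or the symmetric criticality principle---whereas you make explicit the role of the $\mathbb{S}^1$-equivariance, the restriction to the symmetric subspace, and the nontriviality argument from $c_\omega^S>0$; these are exactly the bookkeeping points the paper leaves implicit.
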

\begin{proof}
Let $c$ be the minimax level of the dual functional $\cL^*_\omega$ defined in \eqref{eq:mplevel}. By Lemma \ref{lem:inequality} there holds $c<\beta(S)$, so that Lemma \ref{lem:compactness} and a standard deformation argument (see, for instance, \cite[Theorem 3.4]{struwevariational}) give the existence of a critical point for $\cL^*_\omega$. Moreover, by Lemma \ref{lem:correspondence} this corresponds to a critical point of $\cL_\omega$, which in turn is a weak solution to \eqref{criticaldirac} as the latter is the Euler-Lagrange equation of the functional.
\end{proof}
\subsection{Regularity}
Since the nonlinearity in \eqref{criticaldirac} is Sobolev-critical, regularity does not follow by standard arguments and one needs a more refined bootstrap argument, as in \cite{borrellifrank}. To our knowledge, the basic idea behind that proof can be traced back to \cite{jannellisolimini}.

Observe that
\[
(\cD+m\sigma_3-\omega)(\cD+m\sigma_3+\omega)=\begin{pmatrix} -\Del+m^2-\omega^2 &0 \\ 0 & -\Del+m^2-\omega^2 \end{pmatrix}\,.
\]

\begin{lemma}\label{lem:liouville}
Fix $p\geq1$. Let $\psi\in L^{p}(\R^2,\C^2)$ be a distributional solution to 
\be\label{eq:lineareq}
(\cD+m\sigma_3-\omega)\psi=0\,,\qquad m>0, \omega\in(-m,m)\,.
\ee
Then $\psi\equiv 0$.
\end{lemma}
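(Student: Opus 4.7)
The plan is to exploit the factorisation that the author has just written down. Multiplying the equation on the left by $(\mathcal{D}+m\sigma_3+\omega)$ and using
\[
(\mathcal{D}+m\sigma_3+\omega)(\mathcal{D}+m\sigma_3-\omega)=\bigl(-\Delta+m^2-\omega^2\bigr)\, \mathbf{1}_{\mathbb{C}^2},
\]
the distributional identity $(\mathcal{D}+m\sigma_3-\omega)\psi=0$ becomes
\[
(-\Delta+\mu^2)\psi=0 \qquad\text{in }\mathcal{D}'(\mathbb{R}^2,\mathbb{C}^2),
\]
where $\mu^2:=m^2-\omega^2>0$ because $\omega\in(-m,m)$. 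This reduces a first-order spinor problem to a componentwise scalar Helmholtz-type equation with strictly positive zero-order coefficient, which is the crucial gain.

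Next I would pass to Fourier variables. Since $L^p(\mathbb{R}^2,\mathbb{C}^2)\hookrightarrow\mathcal{S}'(\mathbb{R}^2,\mathbb{C}^2)$ for every $p\in[1,\infty]$ (Schwartz functions lie in every $L^{p'}$), the spinor $\psi$ is a tempered distribution and its Fourier transform $\widehat{\psi}\in\mathcal{S}'$ is well defined. Applying the Fourier transform componentwise to the equation above yields
\[
\bigl(|\xi|^2+\mu^2\bigr)\widehat{\psi}(\xi)=0 \qquad\text{in }\mathcal{S}'(\mathbb{R}^2,\mathbb{C}^2).
\]
Because $|\xi|^2+\mu^2\geq\mu^2>0$ for every $\xi\in\mathbb{R}^2$, the function $(|\xi|^2+\mu^2)^{-1}$ is smooth and slowly increasing (it lies in $\mathcal{O}_M(\mathbb{R}^2)$), hence a multiplier on $\mathcal{S}'$. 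Multiplying by this multiplier gives $\widehat{\psi}=0$, whence $\psi\equiv 0$ by Fourier inversion in $\mathcal{S}'$.

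The argument is essentially routine once the squaring trick is in place. The only step that requires a moment of care is the justification that $(|\xi|^2+\mu^2)^{-1}$ acts on $\mathcal{S}'$ and that the product $(|\xi|^2+\mu^2)\widehat{\psi}$ can therefore be inverted without any summability assumption on $\widehat{\psi}$ beyond its being a tempered distribution; this is standard and is really the only potential obstacle. An alternative, if one preferred to avoid Fourier multipliers on $\mathcal{S}'$, would be to first apply elliptic regularity to the scalar equation $(-\Delta+\mu^2)\psi=0$ to conclude that $\psi\in C^\infty$ and then convolve with the Bessel (Yukawa) kernel $G_\mu$, the fundamental solution of $-\Delta+\mu^2$, using that $G_\mu\in L^{p'}_{\mathrm{loc}}$ with exponential decay to justify the identity $\psi=G_\mu*\bigl((-\Delta+\mu^2)\psi\bigr)=0$ and conclude likewise.
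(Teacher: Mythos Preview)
Your argument is correct and follows essentially the same route as the paper: reduce to the scalar equation $(-\Delta+\mu^2)\psi=0$ via the factorisation, then invoke a Liouville-type result. The paper obtains the scalar equation by testing against $(\mathcal{D}+m\sigma_3+\omega)\chi$ and then simply cites \cite[Lemma 9.11]{liebloss}, whereas you spell out the Fourier-multiplier argument directly; the content is the same.
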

\begin{proof}
Let $\psi\in L^{p}(\R^{n},\C^N)$ be a distributional solution to \eqref{eq:lineareq}, i.e.,
\be\label{eq:weaklinear}
\int_{\R^2}\langle\psi,(\cD+m\sigma_3-\omega)\chi\rangle \,dx=0\,,\qquad\forall\chi\in C^{\infty}_{c}(\R^2,\C^2)\,. 
\ee
Then $\psi$ is also a distributional solution to $(-\Del+\mu^2)\psi=0$, with $\mu^2=m^2-\omega^2$, as
\[
\int_{\R^2}\langle\psi,(-\Del+\mu^2)\chi\rangle \,dx=\int_{\R^2}\langle\psi,(\cD+m\sigma_3-\omega)\underbrace{[(\cD+m\sigma_3+\omega)\chi]}_{\in C^\infty_c(\R^2,\C^2)}\rangle \,dx=0\,,\qquad\forall\chi\in C^{\infty}_{c}(\R^2,\C^2)\,.
\]
Then the claim follows by \cite[Lemma 9.11]{liebloss}.
\end{proof}

We use the above lemma to rewrite \eqref{criticaldirac} as an integral equation. The \emph{Green's function} $\Gamma$ of the Dirac operator $\cD$ is given by
\be\label{eq:greendirac}
\Gamma(x-y)=(\cD_x+m\sigma_3+\omega)G(x-y) \,,\qquad x,y\in\R^2, x\neq y\,,
\ee
where $G(x-y)$ is the Green's function of the operator $( -\Del+\mu^2 )$, with $\mu^2=m^2-\omega^2$. One easily checks that this function satisfies for each fixed $y\in\R^2$ the equation
\be\label{eq:greenequation}
(\cD_x+m\sigma_3+\omega)\Gamma(x-y)=\delta(x-y)I_{2}
\qquad\text{in}\ \R_x^2
\ee
in the sense of distributions. The function $G(x-y)$ is given by
\[
G(x-y)=\frac{1}{2\pi}K_0(\mu\vert x-y\vert)\,,\qquad x,y\in\R^2, x\neq y\,,
\] 
with $K_0$ denoting the inverse Fourier transform of $(\vert \xi\vert^2+\mu^2)^{-1}$ , i.e., the modified Bessel function of second kind
of order 0 \cite[Section 9.6]{abramowitzstegun}. Then one sees that 
\be\label{eq:smallargument}
\Gamma(x)\sim\vert x\vert^{-1}\,,\qquad \mbox{as $x\to0$}
\ee 
and 
\be\label{eq:largeargument}
\vert\Gamma(x)\vert\sim \vert K'_0(x)\vert\sim\sqrt{\frac{\pi}{2x}}e^{-x}\,,\qquad \mbox{as $\vert x\vert\to\infty$}\,,
\ee
 so that it belongs to the \emph{weak}-$L^2$ space, 
\be\label{eq:weakL^2}
\Gamma\in L^{2,\infty}(\R^2,\C^2)\,.
\ee

\begin{lemma}\label{eq:integraleq}
If $\psi\in L^{4}(\R^2,\C^2)$ solves \eqref{criticaldirac} in the sense of distributions, then
\be\label{eq:integralform}
\psi=\Gamma\ast(\vert\psi\vert^2\psi) \,.
\ee
\end{lemma}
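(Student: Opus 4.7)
The plan is to define $\widetilde{\psi} := \Gamma \ast (|\psi|^2 \psi)$, show that it belongs to $L^4(\R^2,\C^2)$ and that it solves \eqref{criticaldirac} in the sense of distributions, and then apply the Liouville-type result of Lemma \ref{lem:liouville} to conclude that $\psi - \widetilde{\psi} \equiv 0$.

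First, I would check that $\widetilde{\psi}$ is well-defined and lies in $L^4$. Since $\psi \in L^4(\R^2,\C^2)$ we have $|\psi|^2 \psi \in L^{4/3}(\R^2,\C^2)$. By \eqref{eq:weakL^2} the convolution kernel $\Gamma$ lies in the Lorentz space $L^{2,\infty}(\R^2,\C^2)$. The weak Young (Hardy--Littlewood--Sobolev) convolution inequality with exponents satisfying $\tfrac{1}{4/3} + \tfrac{1}{2} - 1 = \tfrac{1}{4}$ then yields
\[
\widetilde{\psi} = \Gamma \ast (|\psi|^2 \psi) \in L^{4}(\R^2,\C^2),
\qquad \| \widetilde{\psi} \|_{L^4} \lesssim \| \Gamma \|_{L^{2,\infty}} \| |\psi|^2 \psi \|_{L^{4/3}} .
\]
If one prefers to avoid Lorentz spaces, the exponential decay \eqref{eq:largeargument} and the bound \eqref{eq:smallargument} allow a splitting $\Gamma = \Gamma_0 + \Gamma_\infty$, where $\Gamma_0$ is supported near the origin and belongs to $L^{p}$ for every $p \in [1,2)$, while $\Gamma_\infty$ belongs to every $L^{p}$, $p\in[1,\infty]$; standard Young's inequality then gives the same conclusion term by term.

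Next I would verify that $\widetilde{\psi}$ satisfies the equation distributionally. Since $\Gamma$ is the fundamental solution of $\cDmo$ (cf.\ \eqref{eq:greenequation}, with the understanding that on the left-hand side we mean the operator $\cDmo$, of which $\Gamma$ is indeed a right-inverse by the identity $(\cD+m\sigma_3-\omega)(\cD+m\sigma_3+\omega) = -\Delta + \mu^2$), we have, for every $\chi \in C^\infty_c(\R^2,\C^2)$,
\[
\int_{\R^2} \langle \widetilde{\psi}, (\cD+m\sigma_3-\omega)\chi \rangle \,dx
= \int_{\R^2} \langle |\psi|^2\psi , \chi \rangle \,dx,
\]
by Fubini (justified by the $L^{4/3}$-integrability of $|\psi|^2\psi$ and the local integrability of $\Gamma$) together with the defining property of $\Gamma$. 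Hence $\phi := \psi - \widetilde{\psi}$ is a distributional solution of the homogeneous equation
\[
(\cD+m\sigma_3-\omega)\phi = 0 \quad \text{in } \mathcal{D}'(\R^2,\C^2).
\]

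Finally, since both $\psi$ and $\widetilde{\psi}$ belong to $L^4(\R^2,\C^2)$, so does $\phi$. Applying Lemma \ref{lem:liouville} with $p = 4$ yields $\phi \equiv 0$, i.e.\ $\psi = \Gamma \ast (|\psi|^2 \psi)$, which is \eqref{eq:integralform}. The only mildly delicate step is the first one: one has to make sure that the singularity of $\Gamma$ at the origin does not destroy the $L^4$ integrability of the convolution, and this is precisely what the weak-$L^2$ bound \eqref{eq:weakL^2} (equivalently, the small/large-argument estimates \eqref{eq:smallargument}--\eqref{eq:largeargument}) is designed for.
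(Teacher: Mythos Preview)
Your proposal is correct and follows essentially the same route as the paper: define $\widetilde\psi=\Gamma\ast(|\psi|^2\psi)$, use $\Gamma\in L^{2,\infty}$ together with the weak Young inequality to get $\widetilde\psi\in L^4$, check that $\widetilde\psi$ solves the equation distributionally, and then invoke Lemma~\ref{lem:liouville} to conclude $\psi=\widetilde\psi$. Your parenthetical remark about \eqref{eq:greenequation} is also apt: what is actually needed (and what the construction of $\Gamma$ gives) is $(\cD+m\sigma_3-\omega)\Gamma=\delta I_2$, so the sign of $\omega$ there should be read accordingly.
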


\begin{proof}
Since $\psi\in L^4$, $\vert\psi\vert^2\psi\in L^{\frac{4}{3}}$ and therefore, by the weak Young inequality,
$$
\tilde\psi:=\Gamma\ast(\vert\psi\vert^2\psi)
$$
satisfies
$$
\tilde\psi\in L^{4}(\R^2,\C^2) \,,
$$ 
as  $\Gamma\in L^{2,\infty}$. Moreover, it is easy to see that
$$
(\cD+m\sigma_3-\omega)\tilde\psi = \vert\psi\vert^2\psi
\qquad\text{in}\ \R^2
$$
in the sense of distributions. This implies that
$$
(\cD+m\sigma_3-\omega)(\psi-\tilde\psi)=0
\qquad\text{in}\ \R^2
$$
in the sense of distributions and therefore, by Lemma \ref{lem:liouville}, $\psi-\tilde\psi\equiv0$, as claimed.
\end{proof}

\begin{proposition}\label{prop:regularity}
Any distributional solution $\psi\in  L^4(\R^2,\C^2)$ to \eqref{criticaldirac} is smooth.
\end{proposition}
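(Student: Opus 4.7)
My plan is a two-stage bootstrap starting from the integral representation $\psi = \Gamma \ast (\vert\psi\vert^{2}\psi)$ furnished by Lemma \ref{eq:integraleq}. The first stage, in the spirit of Brezis--Kato, upgrades $L^{4}$-integrability to $L^{p}$-integrability for every $p < \infty$; the second stage, a classical elliptic bootstrap, promotes integrability to $C^{\infty}$ smoothness.

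For the first stage I would use that $V := \vert\psi\vert^{2} \in L^{2}(\R^{2})$, and for any small $\eta > 0$ decompose
\[
V = V_{1} + V_{2}, \qquad V_{1} := V\,\mathbbm{1}_{\{V \leq M\}}, \qquad V_{2} := V - V_{1},
\]
with $M = M(\eta)$ so large that $\Vert V_{2}\Vert_{L^{2}} < \eta$ (possible by dominated convergence since $V \in L^{2}$). The integral equation then reads $(I - K_{2})\psi = \Gamma \ast (V_{1}\psi)$, where $K_{2}\phi := \Gamma \ast (V_{2}\phi)$. Using $\Gamma \in L^{2,\infty}(\R^{2})$ (see \eqref{eq:weakL^2}) together with H\"older's inequality and weak Young, one obtains
\[
\Vert K_{2}\phi\Vert_{L^{q}} \leq C_{q}\Vert V_{2}\Vert_{L^{2}}\Vert \phi\Vert_{L^{q}} \leq C_{q}\eta\,\Vert \phi\Vert_{L^{q}}, \qquad q \in (2,\infty),
\]
so that, choosing $\eta = \eta(p)$ small, $I - K_{2}$ is invertible on both $L^{4}$ and $L^{p}$ via its Neumann series. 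At the same time, $V_{1} \in L^{\infty}\cap L^{2}$ lies in every $L^{q}$ with $q \in [2,\infty]$, so H\"older gives $V_{1}\psi \in L^{r}$ with $\tfrac{1}{r} = \tfrac{1}{q} + \tfrac{1}{4}$, and an appropriate choice $q \in (2,4)$ places $\Gamma \ast (V_{1}\psi)$ into $L^{p}$ by weak Young with $\Gamma \in L^{2,\infty}$. Since the Neumann series converges in both $L^{4}$ and $L^{p}$ to the same element and $\psi$ is the unique $L^{4}$-solution of $(I - K_{2})\phi = \Gamma \ast (V_{1}\psi)$, we conclude $\psi \in L^{p}$; arbitrariness of $p$ yields $\psi \in L^{p}$ for every $p \in [4,\infty)$.

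For the second stage, once $\psi \in L^{p}$ for every $p$, the source $\vert\psi\vert^{2}\psi$ lies in every $L^{q}$, so the $L^{q}$-version of the isomorphism $\cDmo : W^{1,q}(\R^{2},\C^{2}) \to L^{q}(\R^{2},\C^{2})$ (available for $\omega \in (-m,m)$ in the resolvent gap, paralleling the $q = 4/3$ case already used in Section~\ref{sec:dual}) yields $\psi \in W^{1,q}$ for every $q < \infty$. Morrey's embedding in dimension two then gives $\psi \in L^{\infty}\cap C^{0,\alpha}$, and a standard induction closes the argument: if $\psi \in W^{k,q}$ for some $k \geq 1$, the chain rule and $\psi \in L^{\infty}$ place $\vert\psi\vert^{2}\psi \in W^{k,q}$, so $\cDmo\psi = \vert\psi\vert^{2}\psi$ upgrades $\psi$ to $W^{k+1,q}$, and iterating yields $\psi \in C^{\infty}$.

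The main obstacle is Step 1: applying weak Young directly to $\psi = \Gamma \ast (\vert\psi\vert^{2}\psi)$ with $\Gamma \in L^{2,\infty}$ and $\vert\psi\vert^{2}\psi \in L^{4/3}$ recovers only $\psi \in L^{4}$, which is exactly the Sobolev-critical threshold and produces no improvement. The Brezis--Kato splitting bypasses this by isolating the concentrating part $V_{2}$ of $\vert\psi\vert^{2}$ as a small perturbation on $L^{p}$, while the bounded remainder $V_{1}$ generates a source whose integrability can be pushed arbitrarily high by tuning the H\"older exponent through which $V_{1}$ acts.
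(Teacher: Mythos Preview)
Your proof is correct and follows essentially the same strategy as the paper: both start from the integral representation $\psi = \Gamma \ast (\vert\psi\vert^{2}\psi)$, split $V=\vert\psi\vert^{2}$ in Brezis--Kato fashion into a bounded piece and an $L^{2}$-small piece to break through the critical exponent, and then finish with a standard elliptic bootstrap to $C^{\infty}$. The only difference is packaging of the absorption step: you invert $I-K_{2}$ directly on $L^{p}$ via its Neumann series, whereas the paper (following Jannelli--Solimini) runs the same smallness argument by duality, bounding $\sup\bigl\{\bigl\vert\int\langle\psi,\varphi\rangle\bigr\vert : \Vert\varphi\Vert_{r'}\leq 1,\ \Vert\varphi\Vert_{4/3}\leq M\bigr\}$ and showing it is finite uniformly in $M$.
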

\begin{proof}
Notice that the nonlinearity in \eqref{criticaldirac} is smooth, so that we only need to show that $\psi\in L^\infty(\R^2,\C^2)$. Then smoothness follows by standard elliptic regularity theory.
\smallskip

We first prove that 
\be\label{eq:claim}
\psi\in L^r(\R^2,\C^2)\,, \qquad \mbox{for all $4\leq r<\infty$.} 
\ee
We claim that there exists $C>0$ such that for all $M>0$ there holds
\be\label{eq:sup}
S_M:=\sup\left\{\left\vert\int_{\R^2}\langle\psi,\varphi\rangle\, dx\right\vert\,:\, \Vert \varphi\Vert _{r'}\leq1\,,\,\Vert \varphi\Vert_{4/3}\leq M \right\}\leq C\,,
\ee
so that 
\[
\sup\left\{\left\vert\int_{\R^2}\langle\psi,\varphi\rangle\,d x\right\vert\,:\, \Vert \varphi\Vert _{r'}\leq1\,,\,\varphi\in L^{4/3} \right\}\leq C\,,
\]
and by density and duality, $u\in L^r$. 

Fix $M>0$ and let $\eps>0$ to be determined later. Notice that for any $0< \delta\leq\mu$
\[
f_\eps:=\vert\psi\vert^2\mathbbm{1}_{\{\delta\leq\vert\psi\vert\leq\mu\}}
\]
is bounded and supported on a set of finite measure. We have
\[
\Vert\vert\psi\vert^2-f_\eps \Vert^2_{2}=\int_{\{\vert\psi\vert<\delta\}\cup\{\vert\psi\vert>\mu\}}\vert\psi\vert^4\dd x<\eps
\]
for suitable $\delta,\mu>0$, since $\psi\in L^{4}$. Set $g_\eps:=\vert\psi\vert^2-f_\eps$ and consider $\varphi\in L^{r'}\cap L^{4/3}$ such that $\Vert v\Vert_{r'}\leq 1$ and $\Vert v\Vert_{4/3}\leq M$.

Then \eqref{eq:integralform} gives
\[
\int_{\R^2}\langle\psi,\varphi\rangle\,dx=\int_{\R^2}\langle(\Gamma\ast(f_\eps \psi)),\varphi\rangle\,dx +\int_{\R^2}\langle(\Gamma\ast(g_\eps \psi)),\varphi\rangle\,dx\,.
\]
Fubini's theorem allows to rewrite the second integral on the right-hand side :
\be
\begin{split}
\int_{\R^2}\langle\Gamma\ast(g_\eps \psi),\varphi\rangle\,dx &= \int_{\R^2}\,dx\,\langle\int_{\R^2}\Gamma(x-y)(g_\eps(y) \psi(y))\dd y,\varphi(x)\rangle \\
&=\int_{\R^2}d y\int_{\R^2}\, dx \langle g_\eps(y)\psi(y),\Gamma(x-y)\varphi(x)\rangle \\
&= \int_{\R^2}\langle g_\eps\psi,\Gamma\ast\varphi\rangle\,dy\,.
\end{split}
\ee
Arguing similarly, recalling that $\psi=\Gamma\ast(\vert\psi\vert^2\psi)$, we the last integral can be rewritten so that
\be\label{eq:split}
\int_{\R^2}\langle\psi,\varphi\rangle \,dx=\int_{\R^2}\langle\Gamma\ast(f_\eps \psi),\varphi\rangle\,dx+\int_{\R^2}\langle\psi,\chi_\eps\rangle\,dx\,,
\ee
where
\be\label{eq:h}
\chi_\eps:=\vert\psi\vert^2 \Gamma\ast(g_\eps(\Gamma\ast \varphi))\,.
\ee
Define $s:=\frac{2r}{2+r}$. Then we can now estimate the first integral in \eqref{eq:split} using the H\"older and Young inequalities
\be\label{eq:firstbound}
\begin{split}
\left\vert \int_{\R^2}\langle\Gamma\ast(f_\eps u),\varphi\rangle\,dx  \right\vert &\leq\Vert \Gamma\ast(f_\eps \psi)\Vert_{r}\Vert \varphi\Vert_{r'}\leq\Vert \Gamma\Vert_{2,\infty}\Vert f_\eps \psi\Vert_{s}\Vert \varphi\Vert_{r'}\\
&\leq \Vert G\Vert_{3,\infty}\Vert f_\eps\Vert_{\frac{4s}{4-s}}\Vert u\Vert_6\Vert \varphi\Vert_{r'}\leq C_\eps\,.
\end{split}
\ee
Notice that the constant $C_\eps$ does not depend on M, but only on $\eps,r,\psi$.

The second integral on the right-hand side of \eqref{eq:split} can be bounded as follows. By \eqref{eq:h} and H\"older and Young inequalities, we get
\be\label{eq:first_h_estimate}
\begin{split}
\Vert \chi_\eps\Vert_{r'}&\leq\Vert \vert\psi\vert^2\Vert_{2}\Vert \Gamma\ast(g_\eps(\Gamma\ast \varphi)) \Vert_{s'}\leq \Vert\vert\psi\vert^2\Vert_{2}\Vert \Gamma\Vert_{2,\infty}\Vert g_\eps(\Gamma\ast \varphi)\Vert_{r'} \\
&\leq\Vert\vert\psi\vert^2\Vert_{2}\Vert \Gamma\Vert_{2,\infty}\Vert g_\eps\Vert_{2}\Vert \Gamma\ast \varphi\Vert_{s'}\leq \Vert\vert\psi\vert^2\Vert_{2}\Vert \Gamma\Vert^2_{2,\infty}\Vert g_\eps\Vert_{2}\Vert \varphi\Vert_{r'} \\
&\leq C' \Vert g_\eps\Vert_{2}\Vert \varphi\Vert_{r'}\,,
\end{split}
\ee
 the constant $C'>0$ depending on $\psi$. Similarly, we get
\be\label{eq:second_h_estimate}
\begin{split}
\Vert \chi_\eps\Vert_{4/3}&\leq\Vert\vert\psi\vert^2\Vert_{2}\Vert \Gamma\ast(g_\eps(\Gamma\ast \varphi))\Vert_4\leq\Vert\vert\psi\vert^2\Vert_{2}\Vert \Gamma\Vert_{2,\infty}\Vert g_\eps(\Gamma\ast \varphi)\Vert_{4/3} \\
&\leq\Vert\vert\psi\vert^2\Vert_{2}\Vert \Gamma\Vert_{3,\infty}\Vert g_\eps\Vert_{2}\Vert(G\ast \varphi)\Vert_{4}\leq\Vert\vert\psi\vert^2\Vert_{2}\Vert \Gamma\Vert^2_{2,\infty}\Vert g_\eps\Vert_{2}\Vert \varphi\Vert_{4/3}\\
&\leq C'\Vert g_\eps\Vert_{2}\Vert \varphi\Vert_{4/3}\,.
\end{split}
\ee
Estimates \eqref{eq:first_h_estimate} and \eqref{eq:second_h_estimate} give
\[
\left\vert \int_{\R^2}\langle\psi,\chi_\eps\rangle\,dx\right\vert\leq C'\Vert g_\eps\Vert_{3/2}S_M\leq C'\eps S_M\,,
\]
by \eqref{eq:sup}. Choosing $\eps=(2C')^{-1}$ and taking into account \eqref{eq:firstbound} we obtain
\[
\left\vert\int_{\R^2}\langle\psi,\varphi\rangle\,d x \right\vert\leq C''+\frac{1}{2}S_M\,,
\]
where $C''$ equals the constant $C_\eps$ for $\eps=(2C')^{-1}$. Taking the supremum over all $\varphi$ we have
\[
S_M\leq C''+\frac{1}{2}S_M\implies S_M\leq 2C''\,,
\]
thus proving \eqref{eq:claim}. Recall that by \eqref{eq:integralform}, we have
\[
\psi(x)=\int_{\R^2}\Gamma(x-y)\vert\psi(y)\vert^2\psi(y)\dd y\,,
\]
and since we can write $\Gamma$ as the sum of a function in $L^p$ and one in $L^q$, for some $1<p<3<q$, by the H\"older inequality we get $\psi\in L^\infty$.
\end{proof}

\subsection{Exponential decay of solutions} We conclude the proof of Theorem \eqref{thm:main} showing that the solutions to \eqref{criticaldirac} found in Section \eqref{sec:existence} have exponential decay at infinity. Let $\psi\in L^4(\R^2,\C^2)$ be a distributional solution to \eqref{criticaldirac}, then by Proposition \ref{prop:regularity} such solution is also smooth. Moreover, the proof of such result shows that $\psi\in L^p$ for all $p\in[4,\infty]$ so that, using the properties of the Green function $\Gamma$ in \eqref{eq:greendirac} we can also prove that $\psi$ is H\"older continuous. 
\begin{lemma}\label{lem:holder}
Let $\psi\in L^4(\R^2,\C^2)$ be a distributional solution to \eqref{criticaldirac}. Then $\psi\in C^{0,\alpha}$ for some $\alpha\in(0,1)$
\end{lemma}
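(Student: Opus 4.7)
The plan is to exploit the integral representation
\[
\psi = \Gamma\ast(|\psi|^2\psi)
\]
obtained in Lemma \ref{eq:integraleq}, together with the bound $\psi\in L^\infty(\R^2,\C^2)$ established during the proof of Proposition \ref{prop:regularity}. Writing $f:=|\psi|^2\psi$, we then have $f\in L^p(\R^2,\C^2)$ for every $p\in[4/3,\infty]$, and in particular $f\in L^\infty\cap L^1$. The Hölder regularity of $\psi$ will be read off from the kernel $\Gamma$.

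The first step is to split $\Gamma=\Gamma_1+\Gamma_2$, where $\Gamma_1$ equals $\Gamma$ on $B_1(0)$ and is compactly supported, while $\Gamma_2:=\Gamma-\Gamma_1$ is smooth on $\R^2$ (it coincides with $\Gamma$ away from the origin, which is the only singular point by \eqref{eq:smallargument}) and has exponential decay at infinity by \eqref{eq:largeargument}. Consequently $\Gamma_2\in C^1(\R^2)\cap W^{1,1}(\R^2)$, so that $\Gamma_2\ast f$ is Lipschitz: indeed,
\[
|\Gamma_2\ast f(x)-\Gamma_2\ast f(x')|\leq \|f\|_\infty\,\|\nabla\Gamma_2\|_{L^1}\,|x-x'|.
\]

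The delicate term is $\Gamma_1\ast f$. Using the pointwise bounds $|\Gamma_1(z)|\leq C|z|^{-1}$ and $|\nabla\Gamma_1(z)|\leq C|z|^{-2}$ for $0<|z|\leq 1$, I would estimate, for $h:=|x-x'|$ small,
\[
|\Gamma_1\ast f(x)-\Gamma_1\ast f(x')|\leq \|f\|_\infty\!\int_{\R^2}|\Gamma_1(x-y)-\Gamma_1(x'-y)|\,dy,
\]
and split the integral into the near region $\{|y-x|<2h\}$, where both $|\Gamma_1(x-y)|$ and $|\Gamma_1(x'-y)|$ are controlled directly by $|z|^{-1}$ (yielding a contribution $\lesssim h$ after integrating in polar coordinates), and the far region $\{|y-x|\geq 2h\}\cap\supp\Gamma_1$, where the mean value theorem and the gradient bound give an integrand $\lesssim h\,|y-x|^{-2}$, whose integral yields a contribution $\lesssim h\,|\log h|$. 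Summing,
\[
|\Gamma_1\ast f(x)-\Gamma_1\ast f(x')|\leq C h\,\bigl(1+|\log h|\bigr),
\]
which is bounded by $C_\alpha h^\alpha$ for every $\alpha\in(0,1)$ and small $h$.

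Combining the two estimates yields $\psi=\Gamma\ast f\in C^{0,\alpha}(\R^2,\C^2)$ for any $\alpha\in(0,1)$, proving the claim. The main obstacle is the near-singularity analysis of the convolution kernel, but the only cost is the harmless logarithmic factor, which is absorbed because the statement allows $\alpha$ strictly less than $1$.
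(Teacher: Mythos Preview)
Your proof is correct and follows essentially the same strategy as the paper's: both use the integral representation $\psi=\Gamma\ast(|\psi|^2\psi)$, split the convolution into a near region (controlled by $|\Gamma(z)|\lesssim|z|^{-1}$) and a far region (controlled via the mean value theorem and $|\nabla\Gamma(z)|\lesssim|z|^{-2}$), and conclude H\"older continuity. The only organizational difference is that you first decompose the kernel $\Gamma=\Gamma_1+\Gamma_2$ and use $\|f\|_\infty$ throughout (picking up a harmless $h|\log h|$), whereas the paper keeps $\Gamma$ whole and instead uses H\"older with $\psi\in L^p$, $p\geq4$, to make the far-region integral converge; both devices serve the same purpose of handling the tail at infinity. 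One small remark: your claim $f\in L^1$ is neither justified by the available bounds ($\psi\in L^p$ only for $p\in[4,\infty]$ gives $f\in L^q$ for $q\in[4/3,\infty]$) nor used anywhere in your argument, so you should simply drop it.
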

\begin{proof}
Let $x,z\in\R^2$ with $x\neq z$. Then by \eqref{eq:integralform} we get
\[
\vert\psi(x)-\psi(z)\vert=\left\vert\int_{\R^2}(\Gamma(x-y)-\Gamma(z-y))\vert\psi(y)\vert^2\psi(y)\,dy \right\vert
\]
Take $r=\vert x-z\vert$, and split the above integral as follows
\be\label{eq:splitintegral}
\begin{split}
&\left\vert\int_{B_{2r}(x)}(\Gamma(x-y)-\Gamma(z-y))\vert\psi(y)\vert^2\psi(y)\,dy \right\vert \\
&+\left\vert\int_{\R^2\setminus B_{2r}(x)}(\Gamma(x-y)-\Gamma(z-y))\vert\psi(y)\vert^2\psi(y)\,dy \right\vert=: I+ II\,.
\end{split}
\ee
By the choice of the radius $r$ we see that $B_{2r}(x)\subseteq B_{3r}(z)$ so that the first term can be estimated as
\be\label{eq:first}
I\leq \int_{B_{2r}(x)}\vert\Gamma(x-y)\vert\vert\psi(y)\vert^2\psi(y)\,dy+\int_{B_{3r}(z)}\vert\Gamma(z-y)\vert\vert\psi(y)\vert^2\psi(y)\,dy\,.
\ee
Then, since $\psi\in L^s$ for all $s\in[4,\infty]$ and using \eqref{eq:smallargument} we get
\[
\begin{split}
 \int_{B_{2r}(x)}\vert\Gamma(x-y)\vert\vert\psi(y)\vert^2\psi(y)\,dy&\leq C\int_{B_{2r}(x)}\vert x-y\vert^{-1}\vert\psi(y)\vert\,dy \\
 &\leq C\Vert\psi\Vert_{L^p}\left( \int_{B_{2r}(x)}\vert x-y\vert^{-\frac{p}{p-1}}\,dy\right)^{\frac{p-1}{p}}\\
 & \leq C\Vert\psi\Vert_{L^p}\left( \int_{B_{2r}(0)}\vert w\vert^{-\frac{p}{p-1}}\,dw\right)^{\frac{p-1}{p}}\\
 &\leq C \Vert\psi\Vert_{L^p} r^{\frac{p-2}{p-1}}\,,
\end{split}
\]
where we have used the H\"older inequality with $p\geq 4$ . The other integral in \eqref{eq:first} can be estimated similarly, and thus
\be\label{eq:firstestimate}
I\leq C\Vert\psi\Vert_{L^p}r^{\frac{p-2}{p-1}}\,, \qquad p\geq4\,.
\ee
We now turn to the second integral in \eqref{eq:splitintegral}. Observe that if $y\in\R^2\setminus B_{2r}(x)$, then $y\neq x$ and $y\neq z$ so that we can apply the mean value theorem and get the existence of a point $w_y$ on the segment between $y-x$ and $z-y$ so that 
\[
\vert \Gamma(x-y)-\Gamma(z-y)\vert\leq \vert\nabla\Gamma(w_y)\vert\vert x-z\vert\,.
\]
By \eqref{eq:largeargument} we see that 
\[
\vert\nabla\Gamma(w_y)\vert\leq C\vert y-x\vert^{-2}\,,\qquad y\in\R^2\setminus B_{2r}(x)\,,
\]
 and then, arguing as for \eqref{eq:firstestimate}, we can estimate
 \be\label{eq:secondestimate}
 \begin{split}
 II&\leq Cr\int_{\R^2\setminus B_{2r}(x)}\vert x-y\vert^{-2}\vert\psi(y)\vert\,dy\\
 &\leq Cr\Vert\psi\Vert_{L^q}\left(\int_{\R^2\setminus B_{2r}(0)}\vert w\vert^{-2}\,dw \right)^{\frac{q-1}{q}}\\
 &\leq C\Vert\psi\Vert_{L^q} r^{3-2\frac{q}{q-1}}\,,
 \end{split}
 \ee
 where $q\geq 4$. Then the claim follows combining \eqref{eq:firstestimate} and \eqref{eq:secondestimate}, by the arbitrariness of $p,q\geq4$.
\end{proof}

The above result implies, in particular, that $\psi$ is uniformly continuous and thus tends to zero at infinity.
\begin{lemma}
Let $\psi\in L^p(\R^2,\C^2)\cap C^{0,\alpha}(\R^2,\C^2)$ for some $p\geq 1$, $0<\alpha<1$. Then
\be\label{eq:tendstozero}
\lim_{\vert x\vert\to\infty}\psi(x)=0\,.
\ee
\end{lemma}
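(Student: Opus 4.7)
The plan is a standard contradiction argument combining the uniform oscillation control given by Hölder continuity with the finite $L^p$ mass.

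Suppose, toward a contradiction, that \eqref{eq:tendstozero} fails. Then there exist $\eps>0$ and a sequence $(x_n)_{n\in\N}\subseteq\R^2$ with $|x_n|\to\infty$ and $|\psi(x_n)|\geq\eps$ for every $n$. Let $[\psi]_{0,\alpha}$ denote the Hölder seminorm of $\psi$. For any $y\in\R^2$ with $|y-x_n|\leq r$ one has
\[
|\psi(y)|\geq|\psi(x_n)|-[\psi]_{0,\alpha}|y-x_n|^\alpha\geq\eps-[\psi]_{0,\alpha}r^\alpha,
\]
so if I choose $r_0:=\bigl(\tfrac{\eps}{2[\psi]_{0,\alpha}}\bigr)^{1/\alpha}$ (with the convention $r_0=\infty$ if $[\psi]_{0,\alpha}=0$, in which case $\psi$ is constant and the conclusion is immediate from $\psi\in L^p$), then $|\psi|\geq\eps/2$ on $B_{r_0}(x_n)$ for every $n$.

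Since $|x_n|\to\infty$, I can extract a subsequence (still denoted $x_n$) such that the balls $B_{r_0}(x_n)$ are pairwise disjoint: simply pick $x_{n_1}=x_1$ and, given $x_{n_1},\dots,x_{n_k}$, choose $n_{k+1}$ large enough that $|x_{n_{k+1}}|>|x_{n_k}|+2r_0+k$, which is possible because $|x_n|\to\infty$. Then
\[
\int_{\R^2}|\psi|^p\,dx\geq\sum_{k=1}^\infty\int_{B_{r_0}(x_{n_k})}|\psi|^p\,dx\geq\sum_{k=1}^\infty\bigl(\tfrac{\eps}{2}\bigr)^p\,|B_{r_0}|=+\infty,
\]
contradicting $\psi\in L^p(\R^2,\C^2)$. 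Hence the assumption fails and \eqref{eq:tendstozero} holds.

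There is no real obstacle here; the only point requiring a little care is the degenerate case $[\psi]_{0,\alpha}=0$, handled above, and making sure the balls can be chosen disjoint, which is automatic from $|x_n|\to\infty$. Once \eqref{eq:tendstozero} is established, combining it with the pointwise Green's function representation \eqref{eq:integralform} and the decay \eqref{eq:largeargument} of $\Gamma$ will give the exponential bound claimed in Theorem \ref{thm:main}.
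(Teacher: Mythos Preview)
Your proof is correct and follows essentially the same contradiction argument as the paper: assume a sequence $x_n\to\infty$ with $|\psi(x_n)|\geq\eps$, use H\"older continuity to get a uniform lower bound on balls of fixed radius, and contradict $\psi\in L^p$. Your version is slightly more detailed---you explicitly compute the radius via the H\"older seminorm, treat the degenerate case $[\psi]_{0,\alpha}=0$, and extract pairwise disjoint balls---whereas the paper simply invokes uniform continuity and leaves the contradiction implicit, but the substance is the same.
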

\begin{proof}
Suppose \eqref{eq:tendstozero} does not hold. Thus there exist $\eps>0$ and a sequence of points $(x_n)_n\subseteq\R^2$, with $\lim_{n\to\infty}\vert x_n\vert=\infty$, such that
\[
\vert\psi(x_n)\vert\geq\eps\,,\qquad \forall n\in\N\,.
\]
By uniform continuity there exists $\delta>0$ such that 
\[
\vert\psi(x)\vert\geq\frac{\eps}{2}\,,\qquad\mbox{if $\vert x-x_n\vert<\delta$,}
\]
so that
\[
\int_{\vert x-x_n\vert<\delta}\vert\psi\vert^p\, dx \geq\frac{\eps^p\delta}{2^p}\,,\qquad\forall n\in\N\,,
\]
contradicting the fact that $\psi\in L^p$.
\end{proof}
Assume that $\psi$ is a smooth solution to \eqref{criticaldirac} of the form \eqref{eq:ansatz}. Plugging such ansatz into \eqref{criticaldirac} we get the following system for $(u,v)$:
\begin{equation}\label{eq:radial}
\left\{\begin{aligned}
    u'+\frac{S+1}{r}u &=(u^{2}+v^{2})v-(m-\omega)v \\ 
   v'-\frac{S}{r}v&=-(u^{2}+v^{2})u-(m+\omega)u
\end{aligned}\right.
\end{equation}
where the $u':=\frac{du}{dr}$, $v':=\frac{dv}{dr}$. 
\begin{proposition}\label{prop:expdecay}
Let $\psi\in L^4(\R^2,\C^2)\cap C^\infty(\R^2,\C^2)$ be a solution to \eqref{criticaldirac} of the form \eqref{eq:ansatz}. Then there holds $\psi(0)=0$ and 
\be\label{eq:expdecay}
\vert\psi(r,\theta)\vert^2=u^2(r)+v^2(r)\leq C e^{-\sqrt{m-\omega}r}\,,\qquad r>0,\theta\in\mathbb{S}^1\,,
\ee
for some constant $C>0$.
\end{proposition}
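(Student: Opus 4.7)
My plan has three parts. First, to see $\psi(0)=0$ I would use that $\psi \in C^\infty(\R^2,\C^2)$ together with the ansatz \eqref{eq:ansatz}: continuity at the origin of the first component $v(r)e^{\I S\theta}$ forces $v(0)=0$ (since $S\neq 0$, the limit $r\to 0$ would otherwise depend on $\theta$), and similarly the second component forces $u(0)=0$ when $S+1\neq 0$. The borderline case $S=-1$ can be handled by smoothness of higher derivatives combined with the ODE system \eqref{eq:radial}.

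For the decay, I first exploit that by Lemma \ref{lem:holder} and the subsequent lemma $\psi$ is uniformly H\"older continuous and $|\psi(x)|\to 0$ as $|x|\to\infty$; hence $F(r):=u(r)^2+v(r)^2=|\psi(r,\theta)|^2 \to 0$. This is the key input that lets me treat the cubic nonlinearity in \eqref{criticaldirac} as a small perturbation of the linear Dirac operator far from the origin.

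The heart of the proof is a subharmonic-type inequality for $|\psi|$. Applying $(\cD+m\sigma_3+\omega)$ to both sides of \eqref{criticaldirac} and using $(\cD+m\sigma_3+\omega)(\cD+m\sigma_3-\omega)=-\Del+m^2-\omega^2$ gives
\[
(-\Del+m^2-\omega^2)\psi = (\cD+m\sigma_3+\omega)(|\psi|^2\psi).
\]
Pairing with $\psi/|\psi|$ and applying Kato's inequality $\Del|\psi|\geq \re\langle\Del\psi,\psi/|\psi|\rangle$ (a short computation shows the $m\sigma_3$ contribution cancels in the real part) yields, where $\psi\neq 0$,
\[
\Del|\psi| \;\geq\; (m^2-\omega^2)|\psi| - 2\omega|\psi|^3 - |\psi|^5.
\]
Since $|\psi|\to 0$, choosing $R$ large enough the cubic and quintic terms absorb into a small fraction of the leading one, so $\Del|\psi|\geq \kappa^2|\psi|$ on $\{r\geq R\}$ with $\kappa\geq \sqrt{m-\omega}/2$. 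Comparison with the barrier $h(r)=e^{-\kappa r}$, which in $\R^2$ satisfies $\Del h = (\kappa^2 - \kappa/r)h \leq \kappa^2 h$ for $r$ large, via the maximum principle on the exterior $\{r\geq R\}$ (noting that $|\psi|-Mh$ vanishes at infinity and is non-positive at $r=R$ for $M$ large) gives $|\psi|\leq Mh$, and squaring yields \eqref{eq:expdecay}.

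The main obstacle I anticipate is the quantitative absorption of the nonlinear terms, namely verifying that the constant $\kappa$ in the differential inequality can indeed be taken at least $\sqrt{m-\omega}/2$; this requires a careful choice of $R$ using the uniform smallness of $|\psi|$ at infinity. Once the Kato-type inequality with the right constant is in hand, the maximum-principle comparison in the last step is routine.
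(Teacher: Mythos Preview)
Your approach is correct and genuinely different from the paper's. The paper works entirely at the ODE level: setting $f=u^2+v^2$, a direct computation using \eqref{eq:radial} yields $f''=2m(m-\omega)v^2+2m(m+\omega)u^2+V(r)f$ with $V(r)\to 0$ as $r\to\infty$, and a one-dimensional maximum-principle comparison (followed by a Green's function representation for $-f''+k^2f=G$ with $k^2=2m(m-\omega)$) gives the exponential bound on $f=|\psi|^2$ directly. You instead square the Dirac operator at the PDE level, apply Kato's inequality to obtain a two-dimensional subharmonic-type inequality for $|\psi|$, and compare with a radial barrier via the maximum principle. The cancellation of the $m\sigma_3$ cross-term you assert does hold (substitute $\cD\psi=|\psi|^2\psi-(m\sigma_3-\omega)\psi$ into $\re\langle |\psi|^2\cD\psi,\psi/|\psi|\rangle$ and the $\sigma_3$ pieces cancel against the explicit $m|\psi|^2\sigma_3\psi$ term). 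The paper's ODE route is more elementary and tailored to the symmetric ansatz, whereas your argument is more robust: it would apply to any smooth $L^4$ solution vanishing at infinity, symmetric or not. Regarding the precise rate: the paper obtains $k^2=2m(m-\omega)$ for $|\psi|^2$, while you obtain $\kappa^2$ close to $m^2-\omega^2$ for $|\psi|$ (hence roughly $4(m^2-\omega^2)$ for $|\psi|^2$); neither matches the stated exponent $\sqrt{m-\omega}$ uniformly for all $m>0$, $\omega\in(-m,m)$, so the obstacle you flag is real and is in fact shared by the paper---the statement should be read as giving \emph{some} explicit exponential rate.

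One small remark on $\psi(0)=0$: for $S=-1$ the coefficient $(S+1)/r$ in \eqref{eq:radial} is not singular and the second component $\I u(r)$ is purely radial, so neither your continuity argument nor the paper's singular-coefficient argument forces $u(0)=0$ in that borderline case; this minor gap is present in both proofs.
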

\begin{proof}
Define $f:=u^2+v^2$. Observe that the singular terms in \eqref{eq:radial} and the smoothness of $\psi$ imply that $u(0)=v(0)=0$, i.e, $\psi(0)=0$. Moreover, observe that by Lemma \ref{lem:holder} and \eqref{eq:tendstozero} we know that 
\be\label{eq:tozero}
\lim_{r\to\infty}u^2(r)+v^2(r)=0\,.
\ee

By a direct calculation, we get
\[
f'=-\frac{S+1}{r}u^2+\frac{S}{r}v^2-2muv\,,
\]
and
\[
(uv)'=-\frac{uv}{r}+(v^2-u^2)(u^2+v^2)-(m-\omega)v^2-(m+\omega)u^2\,.
\]
Then a straightforward computation gives
\be\label{eq:f''}
f''(r)=2m(m-\omega)v^2(r)+2m(m+\omega)u^2(r)+V(r)(u^2(r)+v^2(r)) \\
\,,
\ee
where $\lim_{r\to\infty}V(r)=0$ by \eqref{eq:tozero}. Given $0<\eps<2m(m-\omega)$, take $R_\eps>0$ such that $V(r)\geq -\eps$ for all $r\geq R_\eps$. Then the function
\[
g(r):=f(r)-f(R_\eps)e^{-\sqrt{2m(m-\omega)-\eps}(r-R_\eps)}\,,\qquad r\geq R_\eps\,,
\]
verifies $g(R_\eps)=0$, $\lim_{r\to\infty}g(r)=0$ and
\[
g''(r)\geq(2m(m-\omega)-\eps)g(r)\,,\qquad r\geq R_\eps\,.
\]
Then the maximum principle gives $g(r)\leq 0$, for all $r\geq R_\eps$, so that we find
\[
f(r)\leq f(R_\eps)e^{-\sqrt{2m(m-\omega)-\eps}(r-R_\eps)}\,,\qquad \mbox{for $r\geq R_\eps$\,.}
\]
By continuity, we conclude that there exists a constant $C_\eps>0$ such that 
\be\label{eq:almost}
f(r)\leq C_\eps e^{-(\sqrt{2m(m-\omega)-\eps})r}\,,\qquad\mbox{for $r>0$.}
\ee
 Equation \eqref{eq:f''} can be rewritten as
\[
-f''(r)+k^2f(r)=G(r)\,, r\geq0\,,
\]
where $k^2=2m(m-\omega)$ and $G(r)=-V(r)[(1+4m\omega)u^2(r)+v^2(r)]$ for which a decay estimate analogous to \eqref{eq:almost} holds. Recall that $f(0)=0$, that is, $f$ verifies Dirichlet boundary conditions, so that applying the Green's function one finds
\[
f(r)=-\frac{1}{2k}\int^\infty_0 G(\rho)\left(e^{-k\vert r-\rho\vert}+e^{-k\vert r+\rho\vert}\right)\,d\rho\,.
\]
Then \eqref{eq:expdecay} easily follows.
\end{proof}

\end{document}